\newtheorem{thm}{Theorem}[section]
\newtheorem{prop}[thm]{Proposition}
\newtheorem{conj}[thm]{Conjecture}
\theoremstyle{definition}
\newcommand{\Q}{\mathbb{Q}}
\newcommand{\Z}{\mathbb{Z}}
\newcommand{\Pp}{\mathbb{P}}
\newcommand{\kk}{\bar{k}} 
\newcommand{\Cc}{\mathcal{C}}
\newcommand{\Hc}{\mathcal{H}}
\newcommand{\Uc}{\mathcal{U}}
\newcommand{\Vc}{\mathcal{V}}
\newcommand{\Wc}{\mathcal{W}}
\newcommand{\Xc}{\mathcal{X}}
\newcommand{\Cb}{\mathbf{C}} 
\newcommand{\Ll}{\mathcal{L}}
\newcommand{\KK}{\mathcal{K}}
\newcommand{\ba}{\mathbf{a}}
\DeclareMathOperator{\rk}{rank}
\DeclareMathOperator{\End}{End} 
\DeclareMathOperator{\Sym}{Sym} 
\title[Rank of Jacobians of $y^s=x(ax^r+b)$]{Rank of Jacobian Varieties of Curves  $y^s=x(ax^r+b)$} 
\author{Sajad Salami}
\address{Institute of Mathematics and Statistics, Rio de Janeiro State University, Maracanã, Rio de Janeiro, 20950-000, RJ, Brazil}
\email{sajad.salami@ime.uerj.br}
\date{\today} 
\keywords{Mordell-Weil rank, Jacobian variety, Lang's conjecture, complete intersection curves, uniformity conjectures, Conguent number curves}
\subjclass[2020]{Primary 11G30; Secondary 14H40, 14G05, 11G50} 
\begin{document}
	
\begin{abstract}
	Let $k$ be a number field. We investigate the Mordell-Weil ranks of Jacobian varieties $J_C$ associated with algebraic curves $C$ of genus $g \geq 1$ defined by affine equations of the form $y^s=x(ax^r+b)$, where $a, b \in k$ ($ab \neq 0$), and $r \geq 1, s \geq 2$ are fixed integers. Assuming the strong version of Lang's conjecture concerning rational points on varieties of general type, we establish that the ranks $r(J_C(k))$ are uniformly bounded as $C$ varies within this family. 
	Our methodology builds upon the geometric approach employed by H. Yamagishi and subsequently adapted by the author for the family $y^s=ax^r+b$. We construct a parameter space $\Wc_n$ for curves possessing $n+1$ specified rational points and analyze its birational model $\Xc_n$, a complete intersection variety. The geometric properties of the fibers of $\Xc_n \to \Sym^{n+1}(\Pp^1)$, specifically their genus and gonality, are studied. Combining these geometric insights with Faltings' theorem, uniformity conjectures stemming from Lang's work, and recent results connecting rank with the number of rational points, we deduce the main boundedness result. In the case of genus one curves $C$, it 
	states  that the rank of elliptic curves  $y^2=x (x^2+B)$ 
	is  uniformly bounded subject to the strong version of Lang's conjecture.
\end{abstract}
	
\maketitle
	
	\section{Introduction}
	
	A central theme in arithmetic geometry concerns the structure of the group of rational points on algebraic varieties defined over number fields. For an abelian variety $A$ over a number field $k$, the celebrated Mordell-Weil theorem asserts that the group $A(k)$ is finitely generated \cite{Silverman1986}. Thus, $A(k) \cong A(k)_{\text{tors}} \oplus \mathbb{Z}^r$, where $A(k)_{\text{tors}}$ is the finite torsion subgroup and $r = \rk A(k)$ is the Mordell-Weil rank. Understanding the behavior of this rank, particularly for Jacobians $J_C$ of algebraic curves $C$, remains a profound challenge. A fundamental open question asks whether $r(J_C(k))$ is uniformly bounded as $C$ varies over all curves of a fixed genus $g \geq 1$ defined over $k$.

    Even for elliptic curves ($g=1$) over $k=\Q$, this question is unresolved. While computational evidence reveals curves of high rank, theoretical heuristics suggest boundedness, possibly with rank exceeding 21 being rare \cite{Park2019}. Deep conjectures in Diophantine geometry provide conditional evidence for rank boundedness. Notably, Lang's conjectures on rational points on varieties of general type \cite{Lang1986, Lang1991} imply, via the work of Caporaso, Harris, and Mazur \cite{Caporaso1997, caporaso2022update}, strong uniformity statements about the number of rational points on curves. Furthermore, Pasten demonstrated that conjectures on Diophantine approximation imply Honda's conjecture on rank boundedness for elliptic curves \cite{Pasten2019}. The link between point counts and rank was solidified by Dimitrov, Gao, and Habegger \cite{Dimitrov2021}, making uniform point boundedness equivalent to uniform rank boundedness.

    An alternative approach, pioneered by Yamagishi \cite{Yamagishi2003} for the family $y^2=ax^4+bx^2+c$, employs geometric methods. By constructing parameter spaces for curves with multiple rational points and demonstrating that these spaces (or relevant subvarieties) become varieties of general type, one can invoke Lang's conjectures to deduce finiteness and uniformity properties, ultimately leading to rank bounds. This geometric strategy was adapted by the author in \cite{Salami2025toappear} to study the uniformity of  Mordell-Weil rank of Jacobian variety of  the family of curves
     $ y^s = ax^r+b$.

    This paper applies the same geometric methodology to investigate the closely related two-parameter family of curves defined by the affine equation
	\begin{equation} \label{curve_main}
		C_{a,b}: y^s=x(ax^r+b),
	\end{equation}
	where $r \geq 1, s \geq 2$ are fixed integers, and $k$ is a number field containing $\zeta_s$, a primitive $s$-th root of unity. We focus on smooth curves $C_{a,b}$ with genus $g_{r,s}(C_{a,b}) \geq 1$. Our main result, conditional on Lang's conjecture, establishes uniform rank boundedness for this family.

	\begin{thm} \label{main_theorem}
		Let $k$ be a number field containing a primitive $s$-th root of unity. Assume the strong version of Lang's conjecture (Conjecture \ref{conj1}) holds for varieties defined over $k$. Then the rank $r(J_C(k))$ is uniformly bounded as $J_C$ varies over the Jacobian varieties of all smooth curves $C$ defined over $k$ by equation \eqref{curve_main} with genus $g_{r,s}(C) \geq 1$.
	\end{thm}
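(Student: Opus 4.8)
The plan is to follow the geometric strategy of Yamagishi adapted by the author in \cite{Salami2025toappear}, translating rank statements into counting statements about rational points on an explicit parameter variety and then invoking Lang's conjecture. The starting observation is the equivalence, due to Dimitrov--Gao--Habegger \cite{Dimitrov2021} together with Caporaso--Harris--Mazur \cite{Caporaso1997}, that uniform boundedness of $r(J_C(k))$ over our family would follow from a uniform bound on the number of $k$-rational points on the curves $C_{a,b}$ — but since we want a bound for the \emph{rank} directly, we instead produce, for each $n$, a parameter space $\Wc_n$ parametrizing tuples $(a,b,P_0,\dots,P_n)$ where $P_0,\dots,P_n$ are distinct $k$-rational points on $C_{a,b}$, and show that for $n$ large enough $\Wc_n$ (or a birational model $\Xc_n$) is a variety of general type. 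Then the strong form of Lang's conjecture forces the $k$-rational points of $\Xc_n$ to lie on a proper closed subvariety, which, after a descent/specialization argument, bounds how many independent points a single $J_C$ can carry, hence bounds the rank.

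The concrete steps, in order, are as follows. First I would set up $\Wc_n \subset \Aa^2 \times (\Pp^1)^{n+1}$: a point $P_i$ on $C_{a,b}$ is essentially determined by its $x$-coordinate $t_i \in \Pp^1$, with the $s$-th power $y_i^s = t_i(at_i^r+b)$ imposing that $t_i(at_i^r+b)$ be an $s$-th power in $k$; to linearize, I pass to the birational model $\Xc_n$ where one records $y_i$ directly, so that $\Xc_n$ is cut out in $\Aa^2\times(\Aa^2)^{n+1}$ by the $n+1$ equations $y_i^s = a t_i^{r+1} + b t_i$, i.e.\ a complete intersection. Second, I would analyze the projection $\Xc_n \to \Sym^{n+1}(\Pp^1)$ (recording the unordered $x$-coordinates): for fixed generic $(t_0,\dots,t_n)$ the two parameters $a,b$ are pinned down linearly by any two of the equations $y_i^s = a t_i^{r+1}+ b t_i$, so the fiber is (a quotient by a finite group of) a curve — an intersection of Fermat-type relations — whose genus and gonality grow with $n$. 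Third, using the genus/gonality growth of these fibers together with the fact that $\Sym^{n+1}(\Pp^1)\cong\Pp^{n+1}$ has controlled geometry, I would compute $K_{\Xc_n}$ via adjunction for the complete intersection and show it is big once $n \ge n_0(r,s)$; the genus-one hypothesis $g_{r,s}(C)\ge 1$ enters here to ensure $r,s$ are in the range where the Fermat-type fibers are themselves of general type for large $n$. Fourth, Lang's conjecture then gives a proper exceptional subvariety $Z_n \subsetneq \Xc_n$ containing all but finitely many $k$-points; combined with Faltings' theorem applied to the fibral curves (which have general type once their gonality is large) this caps the number of rational points per fiber, and the Dimitrov--Gao--Habegger machinery converts that cap into the uniform rank bound.

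The main obstacle I expect is the third step: showing that $\Xc_n$, or the relevant birational model, is genuinely of general type for all admissible $(r,s)$ with $g_{r,s}\ge1$, uniformly in the sense that a single threshold $n_0$ works. The complete-intersection adjunction computation gives the canonical class cheaply, but controlling its positivity requires understanding the fibers of $\Xc_n\to\Sym^{n+1}(\Pp^1)$ precisely enough — their genus and especially their \emph{gonality}, since gonality (not just genus) is what the recent results connecting rank to rational points actually need, and gonality lower bounds for intersections of Fermat-type curves are delicate. A secondary subtlety is the descent argument turning ``$\Xc_n(k)$ lies in a proper subvariety'' into ``each $J_C(k)$ has bounded rank'': one must argue that points on the exceptional locus still correspond to curves whose Mordell--Weil groups are constrained, which requires that the exceptional locus not project dominantly onto the $(a,b)$-space in a way that would let a single $C_{a,b}$ accumulate unboundedly many exceptional points — handled, as in \cite{Salami2025toappear}, by a Noetherian induction on the dimension of the parameter space and a careful bookkeeping of how the $P_i$ vary.
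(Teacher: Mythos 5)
Your overall strategy (parameter space $\Wc_n$, birational model $\Xc_n$, Lang's conjecture, then a rank bound via point counts) matches the paper's architecture, but the load-bearing geometric step you propose is genuinely different from — and harder than — what the paper actually does, and it is also the step you admit you cannot yet carry out. You want to prove that the total space $\Xc_n$ is of general type for $n\gg 0$ via adjunction, apply strong Lang to $\Xc_n$ directly, and then dispose of the exceptional locus by Noetherian induction; this amounts to re-deriving the Caporaso--Harris--Mazur machinery by hand for this particular family. The paper never establishes (or needs) that $\Xc_n$ is of general type. Instead it works fiberwise: each fiber $\Xc_{\ba_n}$ is shown to be a smooth complete intersection \emph{curve} in $\Pp^n_Y$ whose genus exceeds $1$ once $n\geq n_0$ (Proposition \ref{prop_fiber_geometry_main}, Theorem \ref{thm_low_genus_fibers}), so Faltings already gives unconditional finiteness of $\Cc_{\ba_n}(k)$; the uniform bound is then imported as a black box from Theorems \ref{UB} and \ref{UGB}; and the step that actually powers the proof of the main theorem is the emptiness statement of Theorem \ref{uniformity_theorem}, obtained by applying the Correlation Theorem \ref{corel} to the family $\Cc\to\Pp^1_{(a,b)}$ of the \emph{original} curves $C_{a,b}$ (not to $\Xc_n$), which caps $\#C_{a,b}(K)$ itself and hence forbids any member of the family from carrying $m_0+1$ admissible $x$-coordinates. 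So your third step is both unproven in your sketch and unnecessary for the paper's route; if you keep your route you must actually verify smoothness of $\Xc_n$ and bigness of $K_{\Xc_n}$, and then still control whole fibers lying inside the Lang exceptional locus, which is exactly the hard part of Caporaso--Harris--Mazur.

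There is a second concrete gap in your final step, namely the direction of the Dimitrov--Gao--Habegger input. Theorem \ref{dgh} states $\#C(k)\leq c^{1+r(J_C(k))}$: it bounds the number of rational points \emph{by} the rank. Your opening sentence and your last step both use the converse — that a uniform cap on $\#C(k)$ forces a uniform cap on $r(J_C(k))$ — which does not follow from this inequality (a genus-$\geq 2$ curve can have few rational points while its Jacobian has large rank). The paper's own proof makes the same move (it deduces $\#C_j(k)\to\infty$ from $r_j\to\infty$ using this inequality), so you are in its company, but a self-contained argument must explain why large rank of $J_C(k)$ produces many $k$-points on $C$ itself, or replace this step entirely. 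Relatedly, note that the emptiness part of Theorem \ref{uniformity_theorem} requires the generic curve $C_{a,b}$ to have genus $\geq 2$, while the statement being proved covers $g_{r,s}(C)\geq 1$; your sketch does not address the genus-one case either.
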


 The main  result and its consequences provide new geometric evidence for rank boundedness  conjectures  for elliptic curves  similar to  that given in author's previous work \cite{Salami2025toappear} 
different from that of given in \cite{Pasten2019}. 
To be explicit, let us consider
the case $g_{r,s}(C) = 1$ and $k=\Q$, where the curve $C $ is an elliptic curves of the form  $y^2= x(x^2+ b)$ with  $\Z$.
By  Theorem\ref{main_theorem}, one can conclude that the  rank of elliptic curves  
 is  uniformly bounded subject to the strong version of Lang's conjecture,   see Section \ref{sec:applications}.
    We note that  the record of rank in this family is 14 due to  M. Watkins obtained in 2002, according to \cite{DujellaRankRecord}.
 As a particular case of the above family, is the congruent number elliptic curves. There are only 27 congruent number $N$ for which the rank of corresponding elliptic curves $y^2=x (x^2-N^2)$ is equal to 7. It is also suspected that there does not exist any congruent number elliptic  curve of rank 8, see   \cite{Watkins2014}. 
Theorem\ref{main_theorem}  implies the uniformity of rank  in family of congruent number elliptic  curves, subject to the strong version of Lang's conjecture,   see Section \ref{sec:applications}.

	The proof of Theorem\ref{main_theorem} relies crucially on studying the set $\Cc_{\ba_n}(k)$ consisting of smooth curves $C$ in the family \eqref{curve_main} that pass through a given set of $n+1$ points with $x$-coordinates $\alpha_i \in k$ ($i=0,\dots,n$) where $\ba_n$ represents the collection $\{\alpha_0, \dots, \alpha_n\}$ of distinct, non-zero elements of $k$ satisfying $\alpha_i^r \neq \alpha_j^r$ for $i \neq j$. We establish the following finiteness and uniformity results for these sets.

	\begin{thm} \label{finiteness_theorem}
		Let $k$ be a number field containing $\zeta_s$. Let $n_0=4$ if $s=2$ and $n_0=3$ otherwise. For any choice of $\ba_n = \{\alpha_0, \ldots, \alpha_n\}$ consisting of $n+1$ distinct, non-zero elements in $k$ with $\alpha_i^r \neq \alpha_j^r$ for $i \neq j$, and for integers $r \geq 1, s \geq 2$ such that $g_{r,s}(C) \geq 1$, the set $\Cc_{\ba_n}(k)$ is infinite if $n < n_0$ and finite if $n \geq n_0$. Consequently, for a fixed $\ba_n$ with $n \geq n_0$, the rank $r(J_C(k))$ is bounded for $C \in \Cc_{\ba_n}(k)$.
	\end{thm}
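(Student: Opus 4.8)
The plan is to parametrize $\Cc_{\ba_n}(k)$ by the rational points of an explicit affine variety and to reduce the finiteness question to Faltings-type statements via the geometry of an associated complete intersection. First I would observe that demanding a curve $C_{a,b}: y^s = x(ax^r+b)$ pass through points with $x$-coordinates $\alpha_0, \ldots, \alpha_n$ imposes, for each $i$, the condition that $\alpha_i(a\alpha_i^r + b)$ is an $s$-th power in $k$; writing $\alpha_i(a\alpha_i^r+b) = \beta_i^s$ introduces auxiliary coordinates $\beta_i$. Eliminating $(a,b)$ from the two linear relations coming from $i=0,1$ (solvable since $\alpha_0^r \neq \alpha_1^r$) expresses $a$ and $b$ as linear forms in $\beta_0^s, \beta_1^s$, and substituting into the remaining $n-1$ equations yields a system defining a variety $\Wc_n$ (equivalently its birational complete-intersection model $\Xc_n$) inside affine space with coordinates $\beta_0, \ldots, \beta_n$. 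A $k$-point of this variety with the $\beta_i$ nonzero and the genericity conditions on the $\alpha_i$ satisfied corresponds to a curve in $\Cc_{\ba_n}(k)$, up to the finite ambiguity of choosing $s$-th roots; so $\Cc_{\ba_n}(k)$ is finite iff $\Xc_n(k)$ is finite (for fixed $\ba_n$).

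Next I would fiber $\Xc_n$ over $\Sym^{n+1}(\Pp^1)$ by remembering the $\alpha_i$; but here the $\alpha_i$ are \emph{fixed}, so one instead works with a single fiber, which is a complete intersection curve (or higher-dimensional locus) whose genus and gonality are controlled by the degrees of the defining equations. The key step is the dichotomy: for $n < n_0$ the fiber has genus $0$ or $1$ with a rational point — or more precisely admits infinitely many $k$-points by an elementary parametrization (the cases $s=2, n\le 3$ and $s\ge 3, n\le 2$ are low enough that the intersection is rational or elliptic of positive rank over a suitable field) — whereas for $n \ge n_0$ the fiber is a curve of genus $\ge 2$, or a variety all of whose curves have genus $\ge 2$, so Faltings' theorem forces $\Xc_n(k)$, hence $\Cc_{\ba_n}(k)$, to be finite. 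The genus computation is the heart of the matter: one must check that adding the $n_0$-th point pushes the genus of the complete-intersection fiber above $1$, which is a Castelnuovo-type bound on the arithmetic genus of a complete intersection of the relevant multidegree, combined with a verification that the fiber is smooth (or at least has geometric genus $\ge 2$) for generic, and then all admissible, $\ba_n$.

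The final clause — that $r(J_C(k))$ is bounded for $C \in \Cc_{\ba_n}(k)$ when $n \ge n_0$ — then follows immediately: finiteness of $\Cc_{\ba_n}(k)$ means there are only finitely many such curves $C$, each with a well-defined finite rank $r(J_C(k))$, so the maximum over this finite set is a bound. (No uniformity in $\ba_n$ is claimed at this stage; that is what Theorem \ref{main_theorem} extracts by letting $\ba_n$ vary and invoking Lang's conjecture on the total space $\Wc_n$.)

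I expect the main obstacle to be the genus/gonality analysis of the fibers: precisely pinning down the threshold $n_0$ (and in particular explaining the jump to $n_0 = 4$ when $s = 2$) requires a careful study of when the complete-intersection curve cut out by the $s$-th power conditions degenerates, acquires extra components, or drops genus — equivalently, handling the singularities of $\Xc_n$ and the possibility that for small $n$ the variety is unirational over $k$ through a non-obvious parametrization. Verifying infinitude for $n < n_0$ over the number field $k$ (not merely over $\kk$) is the delicate direction, since it needs an actual construction of infinitely many $k$-rational points rather than a dimension count.
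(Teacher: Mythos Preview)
Your proposal is correct and follows essentially the same route as the paper: establish the bijection between $\Cc_{\ba_n}(k)$ and $k$-points on (an open subset of) the complete-intersection fiber $\Xc_{\ba_n}$, compute its genus via the standard formula for a complete intersection of $n-1$ degree-$s$ hypersurfaces in $\Pp^n$ to see that $g(\Xc_{\ba_n}) \le 1$ exactly when $n < n_0$, and then invoke Faltings for $n \ge n_0$ while handling the low-genus cases (conic with a rational point, or elliptic curve) directly. Your identification of the infinitude direction as the delicate one is apt --- the paper defers this to lemmas in the companion work \cite{Salami2025toappear} --- and your only superfluous hedge is allowing the fiber to be higher-dimensional: for fixed $\ba_n$ with $n \ge 2$ it is always a curve.
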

	
We use the uniformity results given this non-conditional result to discuss  a solution for the problem of arithmetic progression  on the points on the elliptic curves $y^2=x (x^2+B)$ as well as the congruent number elliptic curves, in Section~\ref{sec:applications}.

	Using the uniformity theorems on the number of $k$-rational points on curve of genus $\geq 2$ gibem in \cite{Caporaso1997}, as
	consequences of weak version of Lang's conjecture, we showed the cardinal number of $\Cc_{\ba_n}(k)$ does not depends on the choice of ${\ba_n}$ and  is   uniformly bounded and eventually empty for large  n.
	
	\begin{thm} \label{uniformity_theorem}
		Assume the weak version of Lang's conjecture (Conjecture \ref{conj1}) holds over $k$. Let $r \geq 1, s \geq 2$ such that $g_{r,s}(C) \geq 1$, and let $n_0$ be as in Theorem \ref{finiteness_theorem}. Then for $n \geq n_0$, there exists a uniform bound $M_0 = M_0(k, r, s, n)$ such that $\#\Cc_{\ba_n}(k) < M_0$ for all valid choices of $\ba_n$. Furthermore, if $g_{r,s}(C) \geq 2$, there exists an integer $m_0 > n_0$ such that $\Cc_{\ba_m}(k) = \emptyset$ for all $m \geq m_0$ and all valid $\ba_m$. (The strong Lang conjecture implies $M_0$ and $m_0$ can be chosen independently of $k$.)
	\end{thm}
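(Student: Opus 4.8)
The plan is to translate the conditional uniformity into geometric statements about the birational model $\Xc_n$ and its fibration over $\Sym^{n+1}(\Pp^1)$, then feed the fibers into the Caporaso--Harris--Mazur machinery. First I would recall, from the construction underlying Theorem \ref{finiteness_theorem}, that for $n\ge n_0$ the fiber $X_{\ba_n}$ of $\Xc_n\to\Sym^{n+1}(\Pp^1)$ over a valid point $\ba_n$ is a smooth projective curve of genus $\ge 2$ (this is exactly the geometric input that forced $\Cc_{\ba_n}(k)$ to be finite via Faltings), and that its isomorphism class varies in an algebraic family parametrized by an open subset $U_n\subset\Sym^{n+1}(\Pp^1)$ defined over $k$. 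The key point is that $\#\Cc_{\ba_n}(k)$ is, up to a fixed multiplicative constant coming from the degree of the map $X_{\ba_n}\to\Cc_{\ba_n}$ (the finitely many twists/choices of $y$-coordinates, controlled because $k$ contains $\zeta_s$), bounded by $\#X_{\ba_n}(k)$. Under the weak Lang conjecture, the theorem of Caporaso--Harris--Mazur \cite{Caporaso1997} gives a bound $B(g,k)$ on $\#Y(k)$ for \emph{every} smooth curve $Y/k$ of genus $g$; applying this with $g=g(X_{\ba_n})$, which depends only on $r,s,n$ and not on $\ba_n$, yields $M_0=M_0(k,r,s,n)$ as claimed. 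This handles the first assertion.

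For the second assertion — eventual emptiness when $g_{r,s}(C)\ge 2$ — I would argue that the genus of the fiber $X_{\ba_m}$ tends to infinity with $m$ (each additional prescribed point imposes a further covering condition raising the genus), so one can choose $m_0$ large enough that $g(X_{\ba_{m_0}})$ exceeds the Caporaso--Harris--Mazur uniform bound... except that bound itself grows, so the correct mechanism is different: I would instead invoke the stronger \emph{correlation} form of the Caporaso--Harris--Mazur results, namely that for each fixed $k$ there is an integer $N(k)$ such that no curve of genus $\ge 2$ over $k$ has more than $N(k)$ rational points \emph{is false} in that naive form; rather the right tool is that a curve with ``too many'' points forced to lie on it cannot exist once the family $\Xc_m\to U_m$ is itself of general type and one counts dimensions. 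Concretely: the total space $\Xc_m$ (or the relevant subvariety cut out by requiring $m+1$ collinear-type conditions) becomes a variety of general type for $m$ large, and weak Lang implies its $k$-rational points are not Zariski dense; a fiber-dimension argument then shows that for $m\ge m_0$ the generic fiber has no $k$-points, and spreading out (the locus of $\ba_m$ with $X_{\ba_m}(k)\ne\emptyset$ is a proper closed subset, hence for a further enlarged $m_0$ it is empty over all valid $\ba_m$) gives $\Cc_{\ba_m}(k)=\emptyset$. Finally, replacing weak Lang by strong Lang makes every constant produced by Caporaso--Harris--Mazur independent of the field $k$ (their strong-uniformity statement), so $M_0$ and $m_0$ become absolute.

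The main obstacle I anticipate is the second half: making precise the passage from ``$\Xc_m$ is of general type for $m\gg 0$'' to ``$\Cc_{\ba_m}(k)=\emptyset$ uniformly in $\ba_m$''. This requires (i) establishing general type of $\Xc_m$ — presumably already done in the paper's analysis of gonality and genus of the fibers, since high gonality of the fibers combined with a Kodaira-dimension computation on the base forces general type, and (ii) a careful noetherian/spreading-out argument to upgrade ``non-dense rational points'' to ``empty fibers for all $\ba_m$'', which is where one must rule out an infinite nested sequence of bad $\ba_m$'s — handled by noting that the bad locus is closed and the dimension of $U_m$ is bounded while the genus forces the bad locus to drop, or more cleanly by a direct appeal to the uniform Caporaso--Harris--Mazur bound together with $g(X_{\ba_m})\to\infty$ only \emph{if} one has an a priori linear-in-$g$ bound, which weak Lang does \emph{not} supply; so the genuinely delicate point is identifying which of the several equivalent forms of the uniformity conjectures is strong enough, and I would rely on the ``uniform boundedness of rational points in families'' formulation of \cite{Caporaso1997, caporaso2022update} rather than the bare per-curve bound.
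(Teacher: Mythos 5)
Your treatment of the first assertion is essentially the paper's: both apply the Caporaso--Harris--Mazur bound (Theorem \ref{UB}) to the fiber curve $\Xc_{\ba_n}$, whose genus depends only on $s$ and $n$, and use the injection $\Cc_{\ba_n}(k)\hookrightarrow\Xc_{\ba_n}(k)$ to get $M_0=N(k,g(\Xc_{\ba_n}))$. That part is fine.

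For the second assertion there is a genuine gap, and you in fact diagnose your own failure without repairing it. You correctly observe that ``choose $m_0$ so that $g(\Xc_{\ba_{m_0}})$ exceeds the CHM bound'' is circular because $N(k,g)$ grows with $g$, but the fallback you propose --- establishing that the total space $\Xc_m$ is of general type, then a fiber-dimension and spreading-out argument to conclude that all fibers over valid $\ba_m$ are empty --- is never made into an argument; weak Lang only gives non-density of $\Xc_m(k)$, and non-density of rational points on a total space does not by itself force \emph{every} fiber over the (dense) set of valid $\ba_m$ to be empty, since the non-dense set of rational points could still meet infinitely many fibers. The idea you are missing is much simpler and is why the hypothesis $g_{r,s}(C)\ge 2$ appears in this clause: apply Theorem \ref{UB} not to the fibers $\Xc_{\ba_m}$ (whose genus grows with $m$) but to the \emph{original} curves $C_{a,b}$, whose genus $g=g_{r,s}(C)\ge 2$ is fixed, independent of $m$. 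A curve $C\in\Cc_{\ba_m}(k)$ carries $m+1$ rational points $(\alpha_i,\beta_i)$, so $\#C(k)\ge m+1$; once $m+1>N(k,g)$ no such curve exists, i.e.\ $\Cc_{\ba_m}(k)=\emptyset$ for all $m\ge m_0:=N(k,g)$ and all valid $\ba_m$. (The paper frames this via the Correlation Theorem \ref{corel} applied to $\mathcal{C}\to\Pp^1_{(a,b)}$, which is how Theorem \ref{UB} is obtained, but the operative step is exactly the fixed per-curve bound $N(k,g_{r,s}(C))$.) Your confusion stems from conflating the two genera in play: the one that grows with $m$ belongs to $\Xc_{\ba_m}$ and is irrelevant here. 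The final remark --- strong Lang replaces $N(k,g)$ by $N(g)$ via Theorem \ref{UGB}, making $m_0$ field-independent --- you state correctly.
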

	
	The structure of the paper mirrors the logical progression of the proof. Section \ref{sec_geom_constr} details the construction of the parameter space $\Wc_n$ using twists and its birational model $\Xc_n$, a complete intersection variety, establishing their relationship. Section \ref{sec_fiber_analysis} focuses on the geometry of the fibers $\Xc_{\ba_n}$ of $\Xc_n \to \Sym^{n+1}(\Pp^1)$, calculating their genus and gonality, analyzing rational points in low-genus cases, and invoking the theory of towers of curves for the general case, integrating necessary background results. Section \ref{sec_finiteness_uniformity_proofs} provides proofs for Theorems \ref{finiteness_theorem} and \ref{uniformity_theorem}, incorporating Faltings' theorem and Lang's conjectures. In Section \ref{sec_main_theorem_proof}, we prove  the main result, Theorem \ref{main_theorem}, using the uniformity theorem and the work of Dimitrov, Gao, and Habegger.	
	 Throughout, we cite analogous arguments from \cite{Salami2025toappear} where applicable.
	In Section~\ref{sec:applications}, we  have provided consequences of our results for   elliptic curves of the form $y^2=x(x^2+B)$ and  in particular the congruent number elliptic curves. 
	 Finally, in   Section \ref{examples}, we have provided a computational example of the correspondence between
	the set of curves  $\Cc_{\ba_n}$ and rational points on the fiber $\Xc_{\ba_n}$.

	\section{Some conjectures and results in Diophantine Geometry}
	\label{res-conjs}
	A smooth projective variety $X$ over $\kk$ is of \textit{general type} if its Kodaira dimension $\kappa(X)$ equals its dimension.
For curves, this corresponds to genus $g \geq 2$. The seminal result in this area is Faltings' theorem, formerly the Mordell conjecture, which states that: 
		{\it If $C$ is a smooth algebraic curve of genus $g \geq 2$ over a number field $k$, then $\#C(K) < \infty$ for any finite extension $K$ of $k$. }
	
	Lang proposed a far-reaching generalization of this to higher dimensions.
	\begin{conj}[Lang, \cite{Lang1986, Lang1991}]
		\label{conj1}
		Let $k$ be a number field and $X$ a smooth variety of general type over $k$.
		\begin{enumerate}
			\item[(a)] \textbf{(Weak)} The set of $k$-rational points $X(k)$ is not Zariski dense in $X$.
			\item[(b)] \textbf{(Strong)} There exists a proper Zariski-closed subset $Z \subset X$ such that for any finite extension $K$ of $k$, the set of $K$-rational points on $X \setminus Z$ is finite.
		\end{enumerate}
	\end{conj}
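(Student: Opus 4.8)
The final statement is Lang's conjecture itself, a central and still wide-open problem in Diophantine geometry; what follows is therefore not a route to an unconditional proof but a description of the only avenues along which genuine progress is known, together with an honest account of where each one halts. The natural first move is to reduce to the cases settled by Faltings and Vojta. When $X$ is (birational to) a closed subvariety of an abelian variety $A$ over $k$, the Mordell--Lang theorem shows that $X(k)$ is contained in a finite union of translates of abelian subvarieties lying in $X$; if in addition $X$ is of general type it contains no positive-dimensional such translate, so $X(k)$ is in fact finite, establishing even the strong form (b) in this special case. This is precisely the input the present paper uses, since Faltings' theorem for curves of genus $g\geq 2$ is the one-dimensional instance of Mordell--Lang. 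The plan, then, would be: given a general-type $X$, construct a nonconstant map to a (semi)abelian variety through which the rational points factor, and conclude non-density by pulling back the Mordell--Lang conclusion.

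When no such map is available, the second avenue is to exploit positivity of the cotangent bundle. Under strong hypotheses on $\Omega^1_X$ --- ampleness, or the presence of enough globally generated symmetric differentials --- results of Moriwaki and Noguchi, and the function-field and complex-hyperbolic analogues of Green, Griffiths and Lang, establish non-density or finiteness by turning a Diophantine problem into one of hyperbolicity. Here one would equip $X$ with a nonzero symmetric differential form, use Vojta's arithmetic analogue of the Second Main Theorem of Nevanlinna theory to extract a nontrivial height inequality along any infinite set of rational points, and force those points to accumulate on the zero locus of the form, which becomes the exceptional set $Z$ of part (b). Equivalently, one could try to deduce the statement from Vojta's general height-inequality conjecture, which implies Lang's --- but this only trades one open problem for a strictly stronger one.

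The main obstacle, and the reason the conjecture remains open, is exactly that a general variety of general type satisfies none of these hypotheses: bigness of $K_X$ is far weaker than admitting a map to an abelian variety or carrying an ample (even a globally generated) cotangent bundle. Bridging the gap between ``$K_X$ is big'' and the geometric or analytic positivity that every known method demands is the crux that no present technique can cross in general. For this reason the statement is invoked throughout the paper as a hypothesis rather than proved; the contribution of Theorem \ref{main_theorem} and Theorems \ref{finiteness_theorem}--\ref{uniformity_theorem} is to make explicit the strong arithmetic uniformity for the family $C_{a,b}$ that follows once this conjecture is granted.
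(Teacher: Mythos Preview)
Your analysis is correct: the statement is labeled as a \emph{conjecture} in the paper, not a theorem, and the paper makes no attempt to prove it. It is stated only so that it can be invoked as a standing hypothesis in Theorems \ref{main_theorem}, \ref{uniformity_theorem}, and the uniformity results of Caporaso--Harris--Mazur recalled in Section~\ref{res-conjs}. There is therefore no ``paper's own proof'' to compare against, and your discussion appropriately explains why no unconditional proof is available and how the paper uses the conjecture as input rather than output.
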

	
	Caporaso, Harris, and Mazur showed that Lang's conjecture has profound uniformity implications for rational points on curves \cite{Caporaso1997, caporaso2022update}, based on above conjectures.
	\begin{thm}[Uniformity I, \cite{Caporaso1997}]
		\label{UB}
		The weak version of Lang's conjecture implies that for every number field $k$ and integer $g \geq 2$, there exists a number $N(k,g)$ such that no curve of genus $g$ defined over $k$ has more than $N(k,g)$ rational points.
	\end{thm}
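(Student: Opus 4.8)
The plan is to follow the argument of Caporaso, Harris and Mazur. Since the assertion really concerns families of curves, I would first \emph{reduce to a universal family}: using the Hilbert scheme of tricanonically embedded smooth curves of genus $g$, one obtains a quasi-projective $k$-variety $H$ carrying a universal family $f\colon X\to H$ with the property that every smooth curve of genus $g$ over $k$ occurs as a fibre $X_h$ over some $h\in H(k)$. Thus it suffices to find a bound $N$, uniform over $h\in H(k)$, with $\#X_h(k)\le N$. More generally I would prove, by induction on $\dim B$, the statement: for every integral $k$-variety $B$ and every morphism $f\colon X\to B$ whose generic fibre is a smooth curve of genus $\ge 2$, there is $N=N(f)$ with $\#X_b(k)\le N$ for all $b\in B(k)$ such that $X_b$ is smooth. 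The case $\dim B=0$ is immediate from Faltings' theorem, since then $B(k)$ is finite and each relevant fibre is a genus$\,\ge 2$ curve over $k$.

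The geometric engine of the inductive step is the \emph{Correlation Theorem}: for a family $f\colon X\to B$ as above over a field of characteristic zero there is an integer $n_0$ so that, for every $n\ge n_0$, the $n$-fold fibred power $X^n_B:=X\times_B\cdots\times_B X$ admits a dominant rational map $\Phi\colon X^n_B\dashrightarrow Z$ onto a variety of general type. I would take this as a black box; its proof rests on Arakelov-type positivity of the relative dualizing sheaf $\omega_{X/B}$ for non-isotrivial families, resolution of singularities, and an analysis of the canonical bundle of a suitable model of $X^n_B$, and it is the deepest ingredient of the whole argument.

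Granting Correlation, the inductive step runs as follows. Fix $n\ge n_0$ and $\Phi\colon X^n_B\dashrightarrow Z$. By the weak form of Lang's conjecture (Conjecture~\ref{conj1}(a)), $Z(k)$ is not Zariski dense, so its closure $Z'\subsetneq Z$ is proper; pulling back, every $k$-point of $X^n_B$ lies in the proper closed subset $W:=\mathrm{Ind}(\Phi)\cup\overline{\Phi^{-1}(Z')}$. Decompose $W$ into irreducible components $W_j$, with $B_j\subseteq B$ the closure of the image of $W_j$ under $X^n_B\to B$. The components $W_j$ with $B_j\subsetneq B$, together with the locus over which some $(W_j)_b$ has dimension $\ge n$, cut out a proper closed $B^{\mathrm{bad}}\subsetneq B$; restricting the family to the components of $B^{\mathrm{bad}}$ meeting the smooth locus, the inductive hypothesis on $\dim B$ bounds $\#X_b(k)$ for $b\in B^{\mathrm{bad}}(k)$. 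For $b\in B(k)\setminus B^{\mathrm{bad}}$ with $X_b$ smooth, $X_b(k)^n$ is contained in a proper closed $V_b\subsetneq X_b^n$ whose degree in a fixed projective embedding is bounded uniformly in $b$ (by generic flatness / the relative Hilbert polynomial, absorbing the jump locus into $B^{\mathrm{bad}}$); a combinatorial lemma on fibred powers of a curve then finishes: if $C(k)^n$ lies in a proper closed $V\subsetneq C^n$, then either $\#C(k)$ is bounded in terms of $\deg V$, or, after forgetting one coordinate, $C(k)^{n-1}$ lies in a proper closed subvariety of $C^{n-1}$, permitting a downward induction on $n$. Taking the maximum of the finitely many resulting bounds yields $N(f)$; applying this to the universal family over $H$ gives $N(k,g)$, and one checks that only the weak form of Lang's conjecture was used (the strong form is needed only to make $N$ independent of $k$).

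The main obstacle is twofold. The serious geometric difficulty is the Correlation Theorem itself: forcing the fibred powers to be ``positive enough'' to dominate a variety of general type is far from formal and is the technical heart of \cite{Caporaso1997}. On the arithmetic side, the delicate point is the bookkeeping in the double induction --- one must guarantee that the bound stays uniform while passing repeatedly to bad loci in the base and while peeling off fibred-power coordinates, which in particular requires that the degrees of the subvarieties cut out inside the fibres remain uniformly bounded; this is precisely why the reduction to a Hilbert scheme with a fixed projective embedding is convenient.
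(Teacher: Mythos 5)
This statement is imported verbatim from Caporaso--Harris--Mazur and the paper offers no proof of it, only the citation; your proposal is a faithful reconstruction of the actual CHM argument (reduction to the tricanonical Hilbert scheme, the Correlation Theorem as the geometric input --- which the paper itself isolates as Theorem~\ref{corel} --- weak Lang applied to the general-type target, and the double induction on $\dim B$ and on the number of fibred-power factors via the combinatorial lemma on proper subvarieties of $C^n$ of bounded degree). The sketch is correct at the level of detail offered and correctly identifies both the Correlation Theorem and the uniform-degree bookkeeping as the genuinely hard points, so there is nothing to flag beyond the fact that you are, appropriately, treating Correlation as a black box just as the paper does.
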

	
	\begin{thm}[Uniformity II, \cite{Caporaso1997}]
		\label{UGB}
		The strong version of Lang's conjecture implies that for any integer $g \geq 2$, there exists a number $N(g)$ such that for every number field $k$, there are only finitely many curves $C$ of genus $g$ over $k$ with $\#C(k) > N(g)$.
	\end{thm}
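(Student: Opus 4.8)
\textit{Proof proposal.} This is the main uniformity theorem of Caporaso--Harris--Mazur, and my plan is to reproduce that argument, which proceeds in three stages: (i) build, for each $g\ge 2$, a variety of general type out of the moduli of many-pointed genus-$g$ curves; (ii) feed it to the strong Lang conjecture (Conjecture~\ref{conj1}(b)) and extract a uniform point bound by a combinatorial argument; (iii) descend on the dimension of moduli to absorb the exceptional locus.

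\textit{Stage (i).} Fix $g\ge 2$ and let $M_{g,n}$ be the coarse moduli space of smooth genus-$g$ curves with $n$ ordered distinct marked points, a quasi-projective $\Q$-variety of dimension $3g-3+n$; away from the (proper closed) automorphism locus it is an open subset of the $n$-fold fibered power of the universal curve over $M_g$. The geometric core is the Fibered Power Theorem: high fibered powers of a non-isotrivial family of curves of genus $\ge 2$ are of general type. Applied to the universal family --- via a level cover $M_{g,n}[\ell]$ ($\ell\ge 3$), which is a fine moduli scheme, together with the fact that a resolution of the quotient of a variety of general type by a finite group is again of general type --- this produces an integer $n=n(g)$ for which some smooth projective model $\mathcal{W}$ of $M_{g,n}$ is of general type. \textbf{This is the hard part of the whole proof.} Its verification forces the canonical class of a desingularised compactification of the fibered power to become big for $n$ large by playing the Parshin--Arakelov (semi)positivity of the relative dualising sheaf of the family off against the bounded negative contribution of the base $M_g$, and making this effective and uniform over families is precisely what \cite{Caporaso1997} carry out.

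\textit{Stage (ii).} Apply the strong Lang conjecture to $\mathcal{W}$: there is a proper Zariski-closed $Z\subsetneq\mathcal{W}$, defined over $\Q$ and hence independent of any auxiliary number field, with $(\mathcal{W}\setminus Z)(k)$ finite for every number field $k$. Enlarge $Z$ to contain the automorphism locus, and let $B_0\subsetneq M_g$ be the proper closed set of moduli points over which the whole fibre of $\mathcal{W}\to M_g$ lies in $Z$. For $b\notin B_0$ the fibre over $b$ meets $Z$ in a proper closed subset whose pullback $V_b$ is then a proper closed subset of $C_b^{\,n}$. A projection-and-induction argument on $n$ --- project $V_b$ to one coordinate factor: its non-dominant components meet $C_b$ in a finite set of size bounded by $\deg Z$, while over the remaining points of $C_b$ the fibre of $V_b$ has dimension $<n-1$ and the induction proceeds; the big diagonal (ensuring distinct markings) is treated identically --- yields a constant $N=N(g)$, depending only on $g$ and the fixed degree of $Z$, with the property: every genus-$g$ curve $C$ over any number field $k$ with moduli point outside $B_0$ and $\#C(k)>N$ carries an ordered $n$-tuple of distinct $k$-rational points whose associated pointed curve is a $k$-rational point of $M_{g,n}\setminus Z$, hence of $\mathcal{W}\setminus Z$. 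Since distinct $k$-isomorphism classes of such curves (all with trivial automorphism group, the automorphism locus having been excluded) give distinct such $k$-points, and $(\mathcal{W}\setminus Z)(k)$ is finite, for each $k$ only finitely many genus-$g$ curves $C/k$ with moduli point outside $B_0$ have $\#C(k)>N$.

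\textit{Stage (iii).} Repeat the construction over a desingularisation of $B_0$: the restricted family is still a non-isotrivial family of curves of genus $\ge 2$, now over a base of dimension $<3g-3$, so the Fibered Power Theorem applies and returns a new variety of general type, a new constant, and a new exceptional locus of strictly smaller dimension. As $\dim M_g=3g-3$ is finite this terminates after finitely many steps, and $N(g):=\max$ of the finitely many constants produced --- each independent of $k$, since every variety and subvariety arising is defined over $\Q$ --- is the desired uniform bound; this proves Theorem~\ref{UGB}. (Running the identical scheme with Conjecture~\ref{conj1}(a) in place of (b), so that the exceptional subset $Z=Z_k$ is now only guaranteed to exist over each individual $k$, gives Theorem~\ref{UB} instead.)
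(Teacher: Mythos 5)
The paper does not prove this statement: Theorem~\ref{UGB} is quoted verbatim from Caporaso--Harris--Mazur \cite{Caporaso1997} and used as a black box, so there is no in-paper argument to compare yours against. Your reconstruction does follow the genuine CHM architecture --- fibered powers of the universal family, strong Lang applied to a variety of general type defined over $\Q$, the projection-and-induction combinatorial lemma extracting $N$ from the degree of the exceptional locus, and descending induction on the dimension of the base to absorb $B_0$ --- and stages (ii) and (iii) are essentially right, including the correct observation that it is the $\Q$-rationality of $Z$ that makes $N(g)$ independent of $k$ (and that weakening to Conjecture~\ref{conj1}(a) only yields Theorem~\ref{UB}).

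There is, however, a genuine misstep in Stage (i). The Fibered Power (Correlation) Theorem --- see the paper's own Theorem~\ref{corel} --- asserts that for $n$ large the fibered power $\Cc_{\mathcal B}^n$ \emph{admits a dominant rational map to} a variety of general type; it does not assert that the fibered power, or any smooth projective model of $M_{g,n}$, \emph{is} of general type. These are not interchangeable: general type is not inherited by a variety that merely dominates one (a product $C\times\Pp^1$ dominates the genus-$2$ curve $C$), so your claim that the level-cover-and-quotient manoeuvre ``produces a smooth projective model $\mathcal W$ of $M_{g,n}$ of general type'' does not follow from the cited input. (That $\overline M_{g,n}$ is of general type for $n$ large is true, but it is a separate, unconditional theorem of Logan, not a consequence of the Fibered Power Theorem, and it is not the route CHM take.) The repair is standard and leaves the rest of your argument intact: apply strong Lang to the general-type \emph{target} $W$ of the dominant rational map $h:\Cc_{\mathcal B}^n\dashrightarrow W$, obtain the exceptional set $Z\subsetneq W$ over $\Q$, and take as your proper closed subset of $\Cc_{\mathcal B}^n$ the union of $h^{-1}(Z)$, the indeterminacy locus of $h$, and the closure of the non-dominant components; Stage (ii) then runs on this set exactly as you wrote it. You should restate Stage (i) accordingly before treating the sketch as a proof.
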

	
	 The following geometric theorem, which produces varieties of general type from families of curves, has an important role in the proof of the above uniformity results.
	\begin{thm}[Correlation Theorem, \cite{Caporaso1997}]
		\label{corel}
		Let $f: \Cc \rightarrow \mathcal{B}$ be a proper morphism of integral varieties over $k$ whose generic fiber is a smooth curve of genus $\geq 2$.
For any integer $m \geq 1$, let $\Cc_\mathcal{B}^m$ be the $m$-th fiber product of $\Cc$ over $\mathcal{B}$.
Then, for $m$ sufficiently large, $\Cc_\mathcal{B}^m$ admits a dominant rational map to a variety of general type.
	\end{thm}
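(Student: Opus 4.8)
The plan is to follow the strategy of Caporaso, Harris and Mazur, whose engine is a positivity statement for the canonical bundle of high fiber powers of a family of curves of genus $\geq 2$. I would first carry out the standard reductions: replacing $\mathcal{B}$ by a resolution of an alteration, applying semistable (Deligne--Mumford) reduction and resolving singularities, one may assume that $\mathcal{B}$ is smooth projective, that $f\colon \Cc\to \mathcal{B}$ is a semistable family of stable curves of genus $g\geq 2$ with $\Cc$ smooth projective, and, after one further base change, that $f$ admits a section. It suffices to produce a dominant rational map to a variety of general type for the smooth fiber power $\Cc_{\mathcal{B}}^{m}$, which is birational to the one in the statement, so from now on one works with that model.

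The argument splits according to the variation of moduli. If $f$ is isotrivial, pick a finite \'etale cover $\mathcal{B}'\to \mathcal{B}$ trivialising the monodromy, so that $\Cc\times_{\mathcal{B}}\mathcal{B}'\cong F\times\mathcal{B}'$ for a fixed curve $F$ of genus $g\geq 2$ on which $G=\mathrm{Gal}(\mathcal{B}'/\mathcal{B})$ acts faithfully through $\Aut(F)$. Then $\Cc_{\mathcal{B}}^{m}$ is the free quotient $(F^{m}\times\mathcal{B}')/G$ for the diagonal action, and projection onto $F^{m}$ yields a surjection $\Cc_{\mathcal{B}}^{m}\to F^{m}/G$. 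For $m\geq 2$ the diagonal $G$-action on $F^{m}$ has fixed loci of codimension $\geq 2$, so the quotient map is \'etale in codimension one, $K_{F^{m}/G}$ pulls back to the big and nef $K_{F^{m}}$, and $F^{m}/G$ is of general type. This disposes of the isotrivial case.

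Now suppose $f$ is non-isotrivial, so the moduli map $\mathcal{B}\to\overline{M_g}$ is generically finite onto its image. Here I would invoke Viehweg's weak positivity of the sheaves $f_{*}\omega_{\Cc/\mathcal{B}}^{\otimes \nu}$, upgraded to genuine bigness of suitable determinants because the family varies, together with the Arakelov-type inequality $\deg f_{*}\omega_{\Cc/\mathcal{B}}>0$ in the one-dimensional reductions. On the chosen model one has, modulo exceptional contributions of the resolution, $\omega_{\Cc_{\mathcal{B}}^{m}}\equiv \sum_{i=1}^{m} p_{i}^{*}\omega_{\Cc/\mathcal{B}}+p^{*}\omega_{\mathcal{B}}$, where $p\colon\Cc_{\mathcal{B}}^{m}\to\mathcal{B}$ is the structure map, the $p_{i}$ are the $m$ projections to $\Cc$, and $\omega_{\Cc/\mathcal{B}}$ is nef and fiberwise ample. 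The top self-intersection of the right-hand side is a polynomial in $m$ whose dominant term is positive by the variation-positivity above, whereas the only potentially negative summand, $p^{*}\omega_{\mathcal{B}}$ (an issue only when the base has components rational or of low Kodaira dimension), enters with bounded multiplicity; hence for $m$ sufficiently large $\omega_{\Cc_{\mathcal{B}}^{m}}$ is big and nef, $\Cc_{\mathcal{B}}^{m}$ is of general type, and the identity map serves. In the residual situations where bigness on the total space fails only narrowly, one maps instead along the Iitaka fibration and checks that its image is of general type.

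The main obstacle is precisely this last point, and most acutely the case $\mathcal{B}\cong\Pp^{1}$: a single relative canonical class does not outweigh $p^{*}\omega_{\mathcal{B}}$, so one must show that the positivity forced by genuine variation of moduli, once amplified by passing to the $m$-fold fiber power, eventually dominates the fixed negative defect of the base. Making this quantitative --- controlling the intersection numbers contributed by the singular fibers, ensuring semistability so that $\omega_{\Cc/\mathcal{B}}$ is nef and the numerics are clean, and carrying out the monodromy bookkeeping in the isotrivial reduction --- is the technical heart of the theorem; the remaining manipulations of canonical classes and Iitaka dimension are formal.
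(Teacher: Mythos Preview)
The paper does not prove this theorem: it is quoted verbatim from Caporaso--Harris--Mazur \cite{Caporaso1997} as background in Section~\ref{res-conjs} and used as a black box in the proof of Theorem~\ref{uniformity_theorem}. There is therefore no ``paper's own proof'' to compare your proposal against.

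That said, your outline is a faithful high-level summary of the original CHM argument: the reduction to a semistable model over a smooth base, the isotrivial/non-isotrivial dichotomy, the quotient $F^{m}/G$ in the isotrivial case, and the Viehweg/Arakelov positivity input in the varying case are all the right ingredients. Two small remarks. In the isotrivial case your codimension claim is correct (a nontrivial $g\in\Aut(F)$ has finitely many fixed points on a curve of genus $\geq 2$, so its diagonal fixed locus in $F^{m}$ has codimension $m$), and from \'etaleness in codimension one it follows that $H^{0}(F^{m}/G,nK)=H^{0}(F^{m},nK)^{G}$, whose growth rate is $|G|^{-1}$ times that of $H^{0}(F^{m},nK)$, giving general type. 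In the non-isotrivial case you correctly flag the genuine difficulty: turning weak positivity of $f_{*}\omega_{\Cc/\mathcal{B}}^{\otimes\nu}$ into bigness of $\omega_{\Cc_{\mathcal{B}}^{m}}$ for large $m$, especially over a base of negative Kodaira dimension, is the technical core of \cite{Caporaso1997}, and your sketch does not (and does not claim to) carry it out. As a plan it is sound; as a proof it would need the full positivity machinery of the original paper.
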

 
	Rectenly,  Dimitrov, Gao, and Habegger proved a excellent relation between the rank of a Jacobian and the number of rational points on the curve itself, using the technically machinery of Vojta's method and hight inequalities. 
	\begin{thm}[\cite{Dimitrov2021}]
		\label{dgh}
		Let $g \geq 1$ and $d \geq 1$ be integers.
There exists a constant $c=c(g,d)$ such that if $C$ is a smooth curve of genus $g$ defined over a number field $k$ with $[k:\Q] \leq d$, then
		\[ \#C(k) \leq c^{1+r(J_C(k))}.
\]
	\end{thm}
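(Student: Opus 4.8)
The plan is to transfer the count of rational points on $C$ to a count of points in its Jacobian and then run the Vojta--Faltings method in quantitative form. First I would fix a degree-one $k$-rational divisor class on $C$ (after replacing $k$ by an extension of degree $\leq 2g$ if necessary, which only changes the constant since $d$ is fixed) and use the resulting Abel--Jacobi embedding $C \hookrightarrow J := J_C$. The N\'eron--Tate height $\hat h$ makes $V := J_C(k)\otimes_{\Z}\mathbb{R}$ into a Euclidean space of dimension $r := r(J_C(k))$, inside which $C(k)$ sits as a finite subset of the lattice $J_C(k)/J_C(k)_{\mathrm{tors}}$. Writing $|P| := \hat h(P)^{1/2}$, I would partition $C(k)$ into \emph{large} points ($\hat h(P)>c_2\max(1,h(C))$), \emph{medium} points ($c_1<\hat h(P)\leq c_2\max(1,h(C))$), and \emph{small} points ($\hat h(P)\leq c_1$), where $h(C)$ is a Faltings/moduli height of $C$ and $c_1,c_2$ depend only on $g$ and $d$; it then suffices to bound each class by $c^{1+r}$.

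For the large points I would invoke Vojta's inequality in R\'emond's explicit form: there are $\varepsilon_0=\varepsilon_0(g)>0$ and $\kappa=\kappa(g)\geq 1$ such that any $P,Q\in C(k)$ with $\hat h(P)\geq c_3\max(1,h(C))$ and $\kappa\,\hat h(P)\leq \hat h(Q)$ satisfy $\langle P,Q\rangle \leq (1-\varepsilon_0)\,|P|\,|Q|$ in $V$. Covering $V$ by $O_g(1)^r$ narrow cones and grouping by dyadic shells $\kappa^m\leq \hat h(P)<\kappa^{m+1}$, each cone-shell box contains at most one large point and the number of nonempty shells is $O(1)$ once $r$ and the degree are fixed (this is where the logarithmic term in Vojta's bound is absorbed), so there are at most $c_4^{1+r}$ large points. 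The small points are handled by a companion argument: Mumford's gap principle bounds the number of points of comparable small height in a narrow cone, and a bound on the number of points below the essential minimum (Zhang--Ullmo, using that $C$ generates $J$) gives at most $c_5^{1+r}$ of them.

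The crux is the medium range, which I would dispose of via the Gao--Habegger height inequality -- this is the new ingredient and the main obstacle. Working over the moduli space $\mathcal{M}_g$ with the universal curve and its relative Jacobian, one shows using the Betti map together with the Ax--Schanuel theorem for the universal abelian variety that the Betti form is ``big'' on the image of the universal curve away from a proper closed locus; this yields a constant $c_6>0$ and a proper Zariski-closed $Z\subsetneq \mathcal{M}_g$ such that for every $C$ with $[C]\notin Z$ and every $P\in C(k)$ one has $\hat h(P)\geq c_6\,h(C)-c_7$. Choosing $c_2$ against $c_1,c_6,c_7$ forces the medium range to be empty once $h(C)$ is large. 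The finitely many bounded-height moduli points not covered this way, and the curves lying over $Z$, are then treated by Noetherian induction: replace $\mathcal{M}_g$ by the components of $Z$ and of the boundary and re-run the same machine with the genus of the relevant sub-objects, the base cases being Faltings' theorem and Northcott's finiteness in the bounded-height regime. Multiplying the three bounds gives $\#C(k)\leq c^{1+r}$ with $c=c(g,d)$.

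I expect the functional-transcendence input behind the height inequality and the accompanying Noetherian induction to be by far the hardest part; the treatment of the large and small ranges amounts to quantitative sharpenings of classical technology (Vojta, Faltings, Mumford, Zhang). Accordingly, in the write-up I would black-box the Gao--Habegger height inequality as an external input and assemble the trichotomy and the covering estimates around it.
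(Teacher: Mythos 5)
The paper offers no proof of this statement: it is imported verbatim from Dimitrov--Gao--Habegger \cite{Dimitrov2021} and used as a black box in Section \ref{sec_main_theorem_proof}, so there is no internal argument to compare yours against. Measured instead against the original, your outline is broadly faithful to the architecture of \cite{Dimitrov2021}: Abel--Jacobi embedding, the N\'eron--Tate height trichotomy, R\'emond's explicit form of Vojta's inequality plus a cone-and-shell packing for the large points, the Gao--Habegger height inequality $\hat h(P)\geq c_6\,h(C)-c_7$ (via the Betti map and Ax--Schanuel on the universal abelian variety) to empty the medium range, and a Noetherian induction over the exceptional locus. But by your own account you black-box the height inequality, which is the entire new content of the theorem; what remains is an assembly diagram rather than a proof, which is acceptable only if labelled as a citation --- exactly as the paper does.

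There is one step that, as written, would fail: the small points. Ullmo--Zhang's resolution of the Bogomolov conjecture gives, for each \emph{fixed} curve $C$ generating its Jacobian, a threshold $\varepsilon(C)>0$ below which $C(\bar{\Q})$ has finitely many points, but both $\varepsilon$ and the count depend on $C$; extracting a bound of the shape $c_5^{1+r}$ uniform over all curves of genus $g$ from that input requires the uniform Bogomolov conjecture, a later and independent theorem (K\"uhne, Gao--Ge--K\"uhne) that is not available inside \cite{Dimitrov2021}. In the actual DGH argument the bounded-height regime is absorbed differently: the height inequality forces all but a bounded number of points to satisfy $\hat h(P)\geq c_6 h(C)-c_7$, so the sub-Vojta window is multiplicatively bounded and Mumford's gap principle counts its points cone by cone, while the finitely many curves of bounded moduli height are dispatched by Northcott and Faltings. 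A smaller inaccuracy: in the large-point count it is Mumford's inequality, not Vojta's alone, that bounds the number of points per cone within a single height shell; you invoke Mumford only for the small points.
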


    \section{Geometric Constructions: Parameter Spaces}
    \label{sec_geom_constr}

    We construct the necessary parameter spaces, closely following the methods detailed in \cite{Hazama1991, Salami2019, Salami2025toappear}. Let $k$ be a number field containing $\zeta_s$, a primitive $s$-th root of unity, and fix integers $r \geq 1, s \geq 2$.

    \subsection{The Parameter Space $\Wc_n$ via Twists}
    Consider the universal curve $\Cc \subset \Pp^2_{(u,x,y)} \times_k \Pp^1_{(a,b)}$ defined by the homogenization of $y^s = x(ax^r+b)$. It carries an automorphism $[\zeta_s]: ([1:x:y], [a:b]) \mapsto ([1:x:\zeta_s y], [a:b])$. For $n \ge 0$, define the $(n+1)$-fold fiber product $\Vc_n = \Cc \times_{\Pp^1} \cdots \times_{\Pp^1} \Cc$. An element is $([a:b], (P_0, \dots, P_n))$ where $P_i=([1:x_i:y_i])$ lies on the curve $C_{a,b}: y^s = x(ax^r+b)$. Let $G = \langle \tau \rangle \cong \Z/s\Z$ act on $\Vc_n$ via $\tau = ([\zeta_s]_0, \dots, [\zeta_s]_n)$. The geometric parameter space is the quotient variety $\Wc_n = \Vc_n/G$.

    Let $\KK_n = k(\Wc_n)$ be the function field of $\Wc_n$, which is the fixed field $k(\Vc_n)^G$. Let $\Ll_n = k(\Vc_n)$. The field extension $\Ll_n/\KK_n$ is cyclic of degree $s$, generated by any $y_i$ satisfying $y_i^s = x_i(ax_i^r+b)$. Consider the twist $\widetilde{C}$ of the generic curve $C$ by this extension. As established in \cite{Hazama1991, Salami2019}, $\widetilde{C}$ is defined over the function field $\KK_n$ by the equation:
	\begin{equation} \label{twist_curve_eq}
		x_0(ax_0^r+b)y^s = x(ax^r+b).
	\end{equation}
	This twisted curve $\widetilde{C}$ possesses $n+1$ canonical $\KK_n$-rational points:
	\begin{equation} \label{twist_points_eq}
		\widetilde{P}_0 = (1:x_0:1), \quad \widetilde{P}_i = \left(1:x_i:\frac{y_i}{y_0}\right) \quad \text{for } i=1,\dots,n.
	\end{equation}
	Let $\widetilde{J}_C$ denote the Jacobian of $\widetilde{C}$. The points $\widetilde{P}_i$ induce points $\widetilde{Q}_i \in \widetilde{J}_C(\KK_n)$ by a chosen embedding $C \hookrightarrow J_C$.
	\begin{thm}[\cite{Salami2019, Salami2021Corrigendum}]
		\label{thm_rank_over_function_field}
		The points $\widetilde{Q}_0, \dots, \widetilde{Q}_n$ are linearly independent in $\widetilde{J}_C(\KK_n)$ over $\Z$. If $m_0 = \rk_\Z(\End_k(J_C)) \ge 1$, then the rank of the Mordell-Weil group satisfies 
		$\rk(\widetilde{J}_C(\KK_n)) =  m_0 (n+1)$.
	\end{thm}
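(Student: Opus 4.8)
The plan is to prove the two assertions in sequence: first the $\Z$-linear independence of $\widetilde{Q}_0, \dots, \widetilde{Q}_n$, then the exact rank formula $\rk(\widetilde{J}_C(\KK_n)) = m_0(n+1)$ under the hypothesis that $m_0 = \rk_\Z(\End_k(J_C)) \ge 1$. For the linear independence, I would follow the specialization strategy of \cite{Salami2019}: the points $\widetilde{Q}_i$ are algebraic functions of the parameters $(a,b)$ and $(x_0, \dots, x_n)$, so it suffices to exhibit a single specialization of these parameters to values in some number field (or in $k$ itself) at which the specialized points remain independent in the Mordell--Weil group of the specialized Jacobian. One then invokes a specialization theorem (in the style of Silverman's specialization of the Néron--Tate height pairing, or Hazama's original argument) to conclude that independence over the number field forces independence over the function field $\KK_n$. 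The independence at the special fiber can be checked either by computing a nondegenerate height-pairing matrix at a well-chosen numerical example, or by the characteristic-$p$ reduction argument of \cite{Hazama1991}; the correction in \cite{Salami2021Corrigendum} presumably patches a gap in the original version of this step, so I would cite it carefully here.

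For the exact rank formula I would argue that the upper bound $\rk(\widetilde{J}_C(\KK_n)) \le m_0(n+1)$ comes from a descent/Galois-cohomological computation that is essentially geometric in nature. The key point is that $\widetilde{C}$ is a twist of the generic curve $C$ by the degree-$s$ cyclic extension $\Ll_n/\KK_n$, and $\Vc_n = \Cc^{n+1}_{\Pp^1}$ is, up to the quotient by $G$, the relevant parameter space; the Jacobian $\widetilde{J}_C$ acquires, via the twisting construction and the diagonal action, a decomposition of its $\KK_n$-points governed by the $\End_k(J_C)$-module structure. Concretely, the $n+1$ marked points each contribute a copy of the endomorphism ring's rank $m_0$ to the Mordell--Weil group, and a theorem of the type used by Hazama and Salami (comparing the generic Mordell--Weil rank of a twisted family with the rank of the endomorphism algebra times the number of independent sections) gives the matching upper bound. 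Since the $m_0(n+1)$ points $\{\phi(\widetilde{Q}_i) : \phi \text{ in a } \Z\text{-basis of } \End_k(J_C),\ 0 \le i \le n\}$ are already independent by the first part combined with the faithfulness of the endomorphism action, the lower bound $\rk \ge m_0(n+1)$ holds, and the two bounds coincide.

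The main obstacle I anticipate is the upper bound half of the rank formula. Linear independence of an explicit finite set of points is a "soft" statement that specialization handles cleanly, but pinning down the rank \emph{exactly} requires controlling all of $\widetilde{J}_C(\KK_n)$, not just the subgroup generated by the obvious points. This is where one genuinely needs the structure theory: one must show that the generic Jacobian $J_C$ over $\KK_n$ has no "extra" rational points beyond those forced by the $n+1$ sections and the endomorphisms — typically via a result identifying $\widetilde{J}_C(\KK_n) \otimes \Q$ with $\mathrm{Hom}$ of the base into the Jacobian, or via a Shioda-style argument bounding the Mordell--Weil rank of the associated fibration by a geometric quantity (a Lefschetz-number or Picard-number computation). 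I would handle this by reducing to the corresponding statement already proved for the sibling family $y^s = ax^r + b$ in \cite{Salami2025toappear} and \cite{Salami2019}, checking that the twisting equation \eqref{twist_curve_eq} and the marked points \eqref{twist_points_eq} satisfy the same formal hypotheses, so that the earlier proof transfers verbatim. If that transfer is not immediate — for instance if the presence of the extra factor of $x$ in $x(ax^r+b)$ changes the Galois module structure of the relevant cohomology — then the genuinely new work is to redo that module computation in the present setting, but I expect it to be a routine variant rather than a substantially different argument.
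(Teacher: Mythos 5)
The paper does not actually prove Theorem~\ref{thm_rank_over_function_field}: it is imported verbatim from \cite{Salami2019, Salami2021Corrigendum} as a black box, so there is no in-paper argument to compare yours against. Your sketch does identify the right strategy from that line of work (Hazama-style analysis of the cyclic twist, with the upper bound coming from an identification of $\widetilde{J}_C(\KK_n)\otimes\Q$ with a $\mathrm{Hom}$-group whose rank is computed in terms of $\End_k(J_C)$), and your instinct that the exact rank formula, not the independence, is the hard half is correct.

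There is, however, a genuine gap in your lower-bound step. You claim that the $m_0(n+1)$ points $\{\phi(\widetilde{Q}_i)\}$, for $\phi$ ranging over a $\Z$-basis of $\End_k(J_C)$, are $\Z$-independent ``by the first part combined with the faithfulness of the endomorphism action.'' That inference is false: $\Z$-linear independence of $\widetilde{Q}_0,\dots,\widetilde{Q}_n$ does not rule out relations of the form $\widetilde{Q}_1=\phi(\widetilde{Q}_0)$ for a non-integer endomorphism $\phi$, in which case the enlarged set collapses and generates a group of rank strictly less than $m_0(n+1)$. What is actually needed (and what the cited references establish) is the stronger statement that $\widetilde{Q}_0,\dots,\widetilde{Q}_n$ are independent \emph{over the ring} $\End_k(J_C)$, i.e.\ that they span a free $\End_k(J_C)\otimes\Q$-module of rank $n+1$; this comes out of the same $\mathrm{Hom}$-group computation that gives the upper bound, not as a corollary of $\Z$-independence. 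Relatedly, both your independence argument (find one good specialization and compute a nondegenerate height pairing) and your upper bound (``transfer verbatim'' from the sibling family $y^s=ax^r+b$) are deferred rather than executed; the transfer is not automatic, since the extra factor of $x$ changes the ramification of the cyclic cover $y\mapsto y^s$ and hence the decomposition of the relevant Jacobian, which is exactly the computation on which the constant $m_0(n+1)$ depends.
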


    \subsection{The Complete Intersection  $\Xc_n$}
    We require a suitable open set of parameters. Let $\Sym^{n}(\Pp^1)$ denote the $n$-fold symmetric product of $\Pp^1$, isomorphic to $\Pp^{n}$ over $k$ \cite{Maakestad2005}. Define the hypersurface $\Hc_{r,n} \subset \Sym^{n+1}(\Pp^1_x)$ as the Zariski closure of the locus where
	$$ \prod_{0\leq i < j\leq n}^{} x_i x_j (x_i^r-x_j^r) = 0. $$
	Let $\Uc_{r,n} = \Sym^{n+1}(\Pp^1_x) \setminus \Hc_{r,n}$. A point $\ba_n$ represented by $\{[1:\alpha_0], \dots, [1:\alpha_n]\}$ lies in $\Uc_{r,n}(k)$ if and only if $\alpha_i \in k$ are distinct, non-zero, and satisfy $\alpha_i^r \neq \alpha_j^r$ for $i \neq j$.

    Consider the variety $\Xc_n \subset \Pp^n_X \times \Pp^n_Y$, defined by the $n-1$ bi-homogeneous equations detailed in \cite{Salami2025toappear} (where $Y_i$ corresponds to $y_i^s$):
	\begin{equation} \label{Xn_def_eq1}
		f_{i-1}: = X_{1}X_{i}(X_{i}^{r-1}-X_{1}^{r-1})Y_{0}^s+X_{0}X_{i}(X_{0}^{r-1}-X_{i}^{r-1})Y_{1}^s+X_{0}X_{1}(X_{1}^{r-1}-X_{0}^{r-1})Y_{i}^s=0
	\end{equation}
	for $i=2, \dots, n$. Equivalently, these conditions arise from the determinantal equations:
	\begin{equation} \label{Xn_def_eq2}
		\det \begin{pmatrix} X_0 & X_1 & X_i \\ X_0^r & X_1^r & X_i^r \\ Y_0^s & Y_1^s & Y_i^s \end{pmatrix} = 0, \quad \text{for } i=2, \dots, n.
	\end{equation}

Let us assume that $k$ contains a fixed $s$-th unit of unity  $\zeta_s$, and  a fixed   $r$-th unit of unity $\zeta_r$. 
 Then, the following trivial points belong to $\Xc_n(k)$:
$$( [\zeta_r^{j_0}:\dots: \zeta_r^{j_n}], [\zeta_s^{i_0}:\dots: \zeta_s^{i_n}]),    $$
where  $0\leq i_0, \cdots i_n,   j_0, \cdots j_n \leq d-1$ and  $d=lcm(r,s)$. 

	\begin{thm} \label{thm_birational_equiv}
		For $n \geq 2$, the varieties $\Wc_n$ and $\Xc_n$ are $k$-birational.
	\end{thm}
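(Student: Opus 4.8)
The plan is to establish the birational equivalence by exhibiting an explicit pair of mutually inverse dominant rational maps between $\Wc_n$ and $\Xc_n$ — equivalently, a $k$-isomorphism of function fields $k(\Wc_n)=\KK_n\cong k(\Xc_n)$ — following the twist constructions of \cite{Hazama1991, Salami2019, Salami2025toappear}; this parallels the analogous birationality statement proved there for the family $y^s=ax^r+b$. One first records the two function fields concretely: $\KK_n=k(\Vc_n)^{G}$ is generated over $k$ by the parameter $[a:b]$, the $x$-coordinates $x_0,\dots,x_n$ of the $n+1$ points, and the $G$-invariant ratios $y_i/y_j$ (the quantities $y_i^{s}=x_i(ax_i^{r}+b)$ being already rational in $[a:b]$ and the $x_i$), while $k(\Xc_n)$ is generated by the inhomogeneous coordinates $X_i/X_0$ and $Y_i/Y_0$ modulo the relations $f_{i-1}=0$ of \eqref{Xn_def_eq1}.

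First I would construct the forward map $\Phi\colon\Wc_n\dashrightarrow\Xc_n$. On the dense open set of $\Wc_n$ lying over $\Uc_{r,n}$ (where the $x_i$ are distinct, nonzero, and satisfy $x_i^{r}\ne x_j^{r}$) a point is the $G$-orbit of a tuple $\bigl([a:b],(P_0,\dots,P_n)\bigr)$ with $P_i=(1:x_i:y_i)\in C_{a,b}$; by \eqref{twist_curve_eq}--\eqref{twist_points_eq} this is the same datum as the twisted curve $\widetilde C$ together with its canonical points $\widetilde P_i=(1:x_i:y_i/y_0)$. Send this orbit to $\bigl([x_0:\dots:x_n],[y_0^{s}:\dots:y_n^{s}]\bigr)\in\Pp^n_X\times\Pp^n_Y$. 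This descends to $\Wc_n$ since the $x_i$ are $G$-fixed and the $y_i^{s}$ are $G$-invariant (in $\Pp^n_Y$ the diagonal $G$-action is in any case trivial). That the image lands on $\Xc_n$ is the heart of the matter: reading the $n+1$ identities $y_i^{s}=x_i(ax_i^{r}+b)$ as a consistent inhomogeneous linear system in the single unknown pair $(a,b)$ forces the associated coefficient matrix with rows $\bigl(x_i,x_i^{r},y_i^{s}\bigr)$ to have rank $\le 2$, and its vanishing $3\times 3$ minors are, after the monomial normalization of \eqref{Xn_def_eq2}, precisely the forms $f_{i-1}$.

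Next I would build the inverse $\Psi\colon\Xc_n\dashrightarrow\Wc_n$. At a general point of $\Xc_n$ the relations \eqref{Xn_def_eq2} say exactly that the linear system expressing the $Y$-data through $(a,b)$ is consistent of rank $2$, so Cramer's rule applied to the two equations indexed by $i=0,1$ recovers $[a:b]$ as an explicit rational function of the $X_i$ and $Y_i$. One then sets $x_i:=X_i$, chooses $y_i$ with $y_i^{s}=Y_i$, and forms $P_i=(1:x_i:y_i)$; this determines a point of $\Vc_n$ up to the simultaneous substitution $y_i\mapsto\zeta_s y_i$, i.e.\ up to the diagonal $G$-action, so $\Psi$ descends to a well-defined rational map into $\Wc_n=\Vc_n/G$. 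Working in the charts $X_0=Y_0=1$, one checks that $\Psi\circ\Phi$ and $\Phi\circ\Psi$ are the identity on dense opens. The hypothesis $n\ge2$ enters here: it ensures the presence of at least one relation $f_{i-1}$, a genuine constraint coupling the $Y_i$, which is needed both for $\Xc_n$ to have the expected geometry and for the elimination of $(a,b)$ to be non-degenerate.

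The step I expect to be the main obstacle is verifying that $\Psi$ is genuinely single-valued on $\Wc_n$: after normalizing the two projective charts one must show that the residual freedom in $(x_i,y_i,a,b)$ — a priori combining the projective rescalings in $\Pp^n_X\times\Pp^n_Y$ with the choices of $s$-th roots for the $y_i$ — collapses onto the diagonal $G$-action and nothing larger, with no modulus lost in passing back and forth. Carrying this out cleanly requires fixing the normalization of the pair $(a,b)$ consistently on both sides, exactly as in the parallel argument of \cite{Salami2019, Salami2025toappear}; the genericity conditions defining $\Uc_{r,n}$, together with the standing hypothesis $g_{r,s}(C)\ge1$, are what keep the whole construction on the locus where none of the determinants or denominators appearing in $\Phi$ and $\Psi$ vanishes.
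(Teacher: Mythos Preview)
Your proposal is correct and follows essentially the same approach as the paper: the same forward map $\Phi$ sending the $G$-orbit of $([a:b],(P_i))$ to $([x_0:\dots:x_n],[y_0:\dots:y_n])$ (with the $y_i^{s}$ appearing as $Y_i^{s}$ in the determinantal equations), and the same inverse $\Psi$ recovering $[a:b]$ by Cramer's rule from the first two equations, exactly as in the paper's explicit formulas $a=(x_1 y_0^{s}-x_0 y_1^{s})/D$, $b=(x_0^{r} y_1^{s}-x_1^{r} y_0^{s})/D$. You supply more justification than the paper does---the rank-$\le 2$ argument for why the image lands on $\Xc_n$, and the discussion of why the $s$-th root ambiguity in $\Psi$ collapses to the diagonal $G$-action---but the underlying strategy is identical.
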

    \begin{proof} (See \cite[Thm 4.1]{Salami2025toappear} for the analogous construction).
        Let $Y_i$ represent $y_i^s$. The map $\varphi_n: \Wc_n \dashrightarrow \Xc_n$ is defined generically by
          $$([a:b], ([1:x_i:y_i])) \mapsto ([x_0:\dots:x_n], [y_0^s : y_1^s : \dots : y_n^s]).$$ 
          The inverse map $\varphi_n^{-1}: \Xc_n \dashrightarrow \Wc_n$ sends $[x_0:\dots:x_n:y_0:\dots:y_n]$ to the class corresponding to $([a:b], ([1:x_i: y_i))$, where $a$ and $b$ are determined by the first two points via $y_0^s=x_0(ax_0^r+b)$ and $y_1^s=x_1(a x_1^r+b)$. Explicitly:
        \[ a = \frac{x_1 y_0^s - x_0 y_1^s}{D}, \quad b = \frac{x_0^r y_1^s - x_1^r y_0^s}{D}, \]
        where $D=x_0 x_1(x_0^{r-1} -x_1^{r-1}) \neq 0.$
        These maps are well-defined rational maps, inverse to each other on dense open subsets where the denominators (related to the conditions defining $\Uc_{r,n}$) are non-zero.
	\end{proof}

    Let $\pi_n: \Wc_n \to \Sym^{n+1}(\Pp^1_x)$ be the natural projection. For $\ba_n \in \Uc_{r,n}(k)$, let $\Wc_{\ba_n} = \pi_n^{-1}(\ba_n)$ be the fiber over $\ba_n=\{[1:\alpha_0], \dots, [1:\alpha_n]\}$.
    \begin{prop} \label{prop_point_curve_corr} 
		There exists a dense open subset $\Uc_{\ba_n} \subset \Wc_{\ba_n}$ such that there is a one-to-one correspondence between the set $\Cc_{\ba_n}(k)$ of smooth curves $C: y^s = x(ax^r+b)$ passing through points $P_i$ with $x$-coordinates $x(P_i)=\alpha_i$, and the set of $k$-rational points $\Uc_{\ba_n}(k)$.
	\end{prop}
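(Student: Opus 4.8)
The plan is to obtain the correspondence by restricting the birational equivalence of Theorem~\ref{thm_birational_equiv} to the fibres over $\ba_n$, and then reading off the rational points on the explicit model $\Xc_n$. The first thing to verify is that $\varphi_n\colon\Wc_n\dashrightarrow\Xc_n$ intertwines $\pi_n$ with the projection $\Xc_n\to\Sym^{n+1}(\Pp^1_x)$ onto the $X$-coordinates: wherever it is defined, $\varphi_n$ sends the class of $([a:b],([1:x_i:y_i]))$ to a point whose $X$-part is $[x_0:\cdots:x_n]$, hence lying over $\{[1:x_0],\dots,[1:x_n]\}$; so $\varphi_n$ restricts to a birational map $\Wc_{\ba_n}\dashrightarrow\Xc_{\ba_n}$ for each $\ba_n$. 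The decisive observation is that for $\ba_n\in\Uc_{r,n}(k)$ the denominators appearing in $\varphi_n$ and in $\varphi_n^{-1}$ --- the quantity $D=x_0x_1(x_0^{r-1}-x_1^{r-1})$ together with the factors $x_ix_j(x_i^r-x_j^r)$ that cut out $\Hc_{r,n}$ --- specialise to nonzero constants on the whole fibre, precisely because $\alpha_0,\dots,\alpha_n$ are distinct, nonzero, and satisfy $\alpha_i^r\neq\alpha_j^r$. Therefore $\varphi_n$ in fact restricts to an isomorphism between a dense open $\Uc_{\ba_n}\subset\Wc_{\ba_n}$ and a dense open of $\Xc_{\ba_n}$, and in particular induces a bijection on the $k$-points of these opens; this upgrade from a birational map to a genuine isomorphism is what makes the statement about $k$-rational (and not merely geometric) points work.

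Next I would take $\Uc_{\ba_n}$ to be the locus of $\Wc_{\ba_n}$ on which, besides $\varphi_n$ being a local isomorphism as above, the parameters $(a,b)$ recovered through $\varphi_n^{-1}$ satisfy $ab\neq0$ and the $P_i$ lie in the affine chart with nonzero $y$-coordinate. Since $x(ax^r+b)$ has $r+1$ distinct roots exactly when $ab\neq0$, every curve coming from $\Uc_{\ba_n}$ is smooth, hence of genus $g_{r,s}(C)\geq1$ under the standing hypothesis on $r,s$; and $\Uc_{\ba_n}$ is dense because $\Wc_{\ba_n}$, being birational to the irreducible complete-intersection curve $\Xc_{\ba_n}$ analysed in Section~\ref{sec_fiber_analysis}, is irreducible. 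The correspondence is then the composite of the isomorphism $\Uc_{\ba_n}(k)\xrightarrow{\ \varphi_n\ }\varphi_n(\Uc_{\ba_n})(k)\subset\Xc_{\ba_n}(k)$ with the map sending $\big([\alpha_0:\cdots:\alpha_n],[\beta_0:\cdots:\beta_n]\big)$ to the pair $\big(C_{a,b},(P_i)_i\big)$, where $(a,b)$ is given by the explicit inverse formulas of Theorem~\ref{thm_birational_equiv} evaluated at these coordinates and $P_i=(\alpha_i,\beta_i)$. The point is that the determinantal relations \eqref{Xn_def_eq2}, specialised at $X=\ba_n$, encode exactly the solvability of $\beta_i^s=\alpha_i(a\alpha_i^r+b)$ for all $i$ in one common pair $(a,b)$ --- the one produced by those formulas --- so, $\beta_i$ being in $k$, the $P_i$ are genuine $k$-rational points of $C_{a,b}$ with $x(P_i)=\alpha_i$, whence $C_{a,b}\in\Cc_{\ba_n}(k)$. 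Conversely, a member $(C_{a,b},(P_i))$ of $\Cc_{\ba_n}(k)$ with $P_i=(\alpha_i,\beta_i)$ goes to $[\beta_0:\cdots:\beta_n]$ and then back through $\varphi_n^{-1}$; the two assignments are mutually inverse by $\varphi_n\circ\varphi_n^{-1}=\mathrm{id}$ and the fact that those inverse formulas recover the same curve.

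The step I expect to be the main obstacle is the bookkeeping forced by the quotient $\Wc_n=\Vc_n/G$ with $G\cong\Z/s\Z$: a $k$-point of $\Wc_{\ba_n}$ need not a priori lift to a $k$-point of $\Vc_{\ba_n}$, because the fibres of $\Vc_n\to\Wc_n$ are $\mu_s$-torsors classified by $k^{\ast}/(k^{\ast})^s$, so one cannot simply assert that a $k$-point of $\Wc_{\ba_n}$ is a curve equipped with $n+1$ marked $k$-rational points. The resolution is exactly the two devices above --- restricting to the open $\Uc_{\ba_n}$ on which $\varphi_n$ is an honest isomorphism, so $k$-points transport faithfully to $\Xc_{\ba_n}$, and then using the $\Xc_n$-model, where a $k$-point is literally a tuple $[\beta_0:\cdots:\beta_n]$ with $\beta_i\in k$, which supplies the marked $k$-points directly. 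It remains to settle two routine points: that the identification of $\Sym^{n+1}(\Pp^1_x)$ used for $\pi_n$ is compatible with the projective base over which $\Xc_n$ fibres, so that ``the fibre over $\ba_n$'' means the same thing on both sides; and that the members excluded by $ab\neq0$ and $\beta_i\neq0$ (singular members, or members with a marked point among the roots of $ax^r+b$) either fall outside $\Cc_{\ba_n}(k)$ under the adopted convention or, being finite in number, do not affect any of the finiteness and uniformity statements that use this proposition. This all parallels the construction in \cite[\S4]{Salami2025toappear}.
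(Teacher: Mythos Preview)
Your proposal is correct and follows the same underlying idea as the paper --- define $\Uc_{\ba_n}$ by the smoothness condition $ab\neq 0$ (plus the avoidance of the locus where $\varphi_n$ fails to be an isomorphism), then read off the two-way correspondence between marked curves and $k$-points --- but you are considerably more careful than the paper's own proof, which is only a three-sentence sketch working directly on $\Wc_{\ba_n}$ without passing through $\Xc_n$.

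The genuine addition in your argument is the explicit detour through the model $\Xc_{\ba_n}$ to resolve the $G$-quotient issue: you correctly flag that a $k$-point of $\Wc_{\ba_n}=\Vc_{\ba_n}/G$ need not, a priori, lift to a $k$-point of $\Vc_{\ba_n}$, and you use the fact that on $\Xc_{\ba_n}$ a $k$-point literally \emph{is} a tuple $[\beta_0:\cdots:\beta_n]$ with $\beta_i\in k$, so the marked points come for free. The paper's proof simply asserts that a point of $\Uc_{\ba_n}(k)$ ``corresponds to parameters $[a:b]$ defining a smooth curve'' without addressing this descent question. Your route buys rigour at the cost of some length; the paper's route is shorter but leaves the $k$-rationality of the $\beta_i$ implicit. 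Both arrive at the same open set and the same bijection.
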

    \begin{proof}
        A smooth curve $C \in \Cc_{\ba_n}(k)$ determined by $[a:b]$ yields points $P_i=(\alpha_i, \beta_i)$ and defines a point in $\Wc_{\ba_n}(k)$. The smoothness condition ensures $ab \neq 0$ and avoids singular points, defining the open set $\Uc_{\ba_n}$. Conversely, a point in $\Uc_{\ba_n}(k)$ corresponds to parameters $[a:b]$ defining a smooth curve $C \in \Cc_{\ba_n}(k)$. The map is well-defined up to isomorphism respecting the points.
    \end{proof}
    
    A point $\ba_n \in \Uc_{r,n}(k)$ is generic if the specialization map for the Jacobian $\widetilde{J}_C$ preserves the rank $\rk(\widetilde{J}_C(\KK_n))$. Silverman's specialization theorem \cite{Silverman1983} implies this holds on a Zariski open dense subset of $\Uc_{r,n}$.
    \begin{prop} \label{prop_rank_specialization_open} (Cf. \cite[Prop 3.3]{Salami2025toappear})
		The set of points $\ba_n \in \Uc_{r,n}(k)$ for which there exists a curve $C \in \Cc_{\ba_n}(k)$ such that the rational points $(\alpha_i, \beta_i)$ induce linearly independent points in $J_C(k)$ contains a Zariski open dense subset of $\Uc_{r,n}$.
	\end{prop}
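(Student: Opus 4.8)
The plan is to deduce the assertion from the function-field statement of Theorem~\ref{thm_rank_over_function_field} by a specialization argument, following \cite[Prop.~3.3]{Salami2025toappear}. Over the generic point of $\Wc_n$, i.e.\ over $\KK_n=k(\Wc_n)$, the twisted curve $\widetilde C$ carries the canonical points $\widetilde P_0,\dots,\widetilde P_n$ of \eqref{twist_points_eq}, and by Theorem~\ref{thm_rank_over_function_field} the induced points $\widetilde Q_0,\dots,\widetilde Q_n$ span a free subgroup $\Gamma\subset\widetilde J_C(\KK_n)$ of rank $n+1$. One then chooses a non-empty Zariski-open $\Wc_n^{\circ}\subseteq\Wc_n$, lying over $\Uc_{r,n}$ and contained in the union of the open sets $\Uc_{\ba_n}$ of Proposition~\ref{prop_point_curve_corr}, over which $\widetilde J_C$ extends to an abelian scheme $\mathcal A\to\Wc_n^{\circ}$ with the $\widetilde Q_i$ extending to sections; the goal is to specialize.

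The first step is to make precise the ``untwisting'' of the construction at a rational fibre. A point $w\in\Uc_{\ba_n}(k)$ corresponds, via Proposition~\ref{prop_point_curve_corr}, to a smooth curve $C=C_{a,b}\in\Cc_{\ba_n}(k)$ together with marked points $P_i=(\alpha_i,\beta_i)\in C(k)$. Since $y_0$ specializes at $w$ to the value $\beta_0\in k$, the twisting class $x_0(ax_0^{\,r}+b)$ of \eqref{twist_curve_eq} specializes to $\beta_0^{\,s}$, which is trivial in $k^{*}/(k^{*})^{s}$; hence the fibre of $\widetilde C$ at $w$ is $k$-isomorphic to $C$ through $(x,y)\mapsto(x,\beta_0 y)$, and under this isomorphism the points $\widetilde P_i$ go to $(\alpha_i,\beta_i)$. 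Consequently the specialization of $\Gamma$ at $w$ is exactly the subgroup of $J_C(k)$ generated by the images of $(\alpha_i,\beta_i)$, and the specialization homomorphism is injective on $\Gamma$ if and only if these images are $\Z$-linearly independent. Thus it suffices to show that specialization is injective on $\Gamma$ for all $w$ in a set whose image under $\pi_n$ contains a Zariski-open dense subset of $\Uc_{r,n}$.

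For this I would apply Silverman's specialization theorem \cite{Silverman1983} to $\mathcal A\to\Wc_n^{\circ}$ and the finitely generated group $\Gamma$: the set of $k$-points of $\Wc_n^{\circ}$ at which $\mathrm{sp}_w$ fails to be injective on $\Gamma$ lies in a proper Zariski-closed subset, so the complementary open set $\Wc_n^{\mathrm{gen}}\subseteq\Wc_n^{\circ}$ is dense; since $\pi_n$ is dominant with geometrically irreducible generic fibre, $\pi_n\bigl(\Wc_n^{\mathrm{gen}}\bigr)$ contains a dense open $\Uc_{r,n}^{\mathrm{gen}}\subseteq\Uc_{r,n}$, and any $\ba_n\in\Uc_{r,n}^{\mathrm{gen}}$ lying in $\pi_n\bigl(\Wc_n^{\mathrm{gen}}(k)\bigr)$ admits a curve $C_w$ with $\pi_n(w)=\ba_n$ of the required kind. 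Together with the remaining point below this would complete the argument.

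The main obstacle is the specialization step, and it has two facets. First, Silverman's theorem is usually stated over a base \emph{curve}, whereas $\Wc_n^{\circ}$ has dimension greater than one; to keep the exceptional set inside a proper closed subvariety one slices $\Wc_n^{\circ}$ by a general pencil of curves dominating $\Uc_{r,n}$ and iterates, using that $\widetilde J_C$ has trivial $\KK_n/k(\Uc_{r,n})$-trace so that the N\'eron--Tate heights of nonzero elements of $\Gamma$ grow along such curves --- this is precisely the higher-dimensional specialization carried out in \cite{Salami2025toappear}. Second, one must know that over a dense open of $\Uc_{r,n}$ the fibre $\Wc_{\ba_n}$ actually carries a $k$-rational point, equivalently that $\Cc_{\ba_n}(k)\neq\varnothing$; for $n<n_0$ this is automatic, since $\Cc_{\ba_n}(k)$ is then infinite by Theorem~\ref{finiteness_theorem}, and one may even apply Silverman once more along $\Wc_{\ba_n}$ to land inside $\Wc_n^{\mathrm{gen}}(k)$, while for larger $n$ this is the most delicate point and has to be treated in tandem with the density of $\pi_n\bigl(\Wc_n^{\mathrm{gen}}(k)\bigr)$.
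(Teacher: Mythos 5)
Your route is the same as the paper's: the paper offers no written proof of Proposition \ref{prop_rank_specialization_open} beyond the sentence preceding it, which invokes Theorem \ref{thm_rank_over_function_field} together with Silverman's specialization theorem and defers to the analogous \cite[Prop.~3.3]{Salami2025toappear}. Your untwisting computation --- that at a $k$-point $w$ of $\Wc_{\ba_n}$ the twisting class $x_0(ax_0^{\,r}+b)$ specializes to $\beta_0^{\,s}$, hence is trivial in $k^*/(k^*)^s$, so the fibre of $\widetilde C$ is $k$-isomorphic to $C$ and the $\widetilde Q_i$ specialize to the classes of $(\alpha_i,\beta_i)$ --- is precisely what makes the specialization argument meaningful, and it is correct and more explicit than anything the paper records.

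Of the two obstacles you flag, the first (Silverman over a base of dimension $>1$) is acceptable as a sketch: slicing by pencils of curves and iterating is standard and is what the cited prior work does. The second is a genuine gap, and it lies in the statement as much as in your argument. The proposition requires that for every $\ba_n$ in the $k$-points of some dense open subset of $\Uc_{r,n}$ there \emph{exists} a curve $C\in\Cc_{\ba_n}(k)$, i.e.\ that $\Wc_{\ba_n}$ has a $k$-point. Silverman's theorem cannot supply this: for $n\geq n_0$ the fibre $\Xc_{\ba_n}$ has genus $\geq 2$, so by Faltings it has only finitely many $k$-points and for a generic $k$-rational $\ba_n$ one expects none. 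Density of $\pi_n\bigl(\Wc_n^{\mathrm{gen}}(k)\bigr)$ in $\Uc_{r,n}$ only says the good locus is dense as a set, not that every $k$-rational $\ba_n$ in a dense open subset lifts to a $k$-point of $\Wc_n^{\mathrm{gen}}$. So the statement is really only defensible for $n<n_0$ (where the fibres are rational or elliptic and carry points), or after reinterpreting the conclusion as ``the set of good $\ba_n$ is the image of the $k$-points of a dense open subset of $\Wc_n$.'' Your decision to leave this case open is the honest one, but it means the proposition as literally stated is not proved for $n\geq n_0$ --- by you or by the paper.
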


    \section{Geometry and Rational Points of Fiber Curves}
    \label{sec_fiber_analysis}

    We analyze the geometry of the fiber curve $\Xc_{\ba_n}$, which, by Theorem \ref{thm_birational_equiv} and Proposition \ref{prop_point_curve_corr}, parameterizes the curves in   $\Cc_{\ba_n}(k)$. Let $g_{1,n}: \Xc_n \to \Pp^n_X$ be the projection. For $\ba_n  \in \Uc_{r,n}(k)$, the fiber $\Xc_{\ba_n} = g_{1,n}^{-1}(\ba_n)$ is a curve in $\Pp^n_Y$ defined by equations \eqref{Xn_def_eq1} specialized at $X_i = \alpha_i$. We note that we use the fact that $\Sym^{n}(\Pp^1)$ is  isomorphic to $\Pp^{n}$ over $k$ as given in \cite[Cor. 2.6]{Maakestad2005}.

	\begin{prop} \label{prop_fiber_geometry_main}
	Keeping the above notations in mind, we have the followings.
		\begin{enumerate}
			\item The fiber $\Xc_{\ba_n}$ is a smooth, projective, complete intersection curve over $k$. Its genus is given by
			\[ g(\Xc_{\ba_n}) = 1 + \frac{s^{n-1}((n-1)s - n-1)}{2}. \]
			\item The $k$-gonality $\gamma_k(\Xc_{\ba_n})$ satisfies the lower bound $\gamma_k(\Xc_{\ba_n}) \geq (s-1)s^{n-2}$ for $n \ge 2$.
		\end{enumerate}
	\end{prop}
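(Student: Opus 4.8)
The plan is to treat $\Xc_{\ba_n}$ as a complete intersection of $n-1$ hypersurfaces in $\Pp^n_Y$ and exploit the fact that, after specializing $X_i = \alpha_i$, each defining equation $f_{i-1}$ becomes a \emph{Fermat-type} relation
\[
  c_{i}^{(0)} Y_0^s + c_{i}^{(1)} Y_1^s + c_i^{(i)} Y_i^s = 0, \qquad i = 2,\dots,n,
\]
with constants $c_i^{(\bullet)} \in k^\times$ (nonvanishing is exactly the condition $\ba_n \in \Uc_{r,n}(k)$, since $\alpha_j \neq 0$ and $\alpha_i^{r}\neq\alpha_j^r$). First I would verify smoothness: the Jacobian matrix of the $n-1$ equations with respect to $(Y_0,\dots,Y_n)$ is, up to nonzero scalars, $s\,\mathrm{diag}(Y_0^{s-1},\dots)$ arranged in a near-diagonal pattern, and a point where it drops rank would force at least two of the $Y_i$ to vanish simultaneously, which together with the equations forces a third to vanish, contradicting $Y\in\Pp^n$; hence $\Xc_{\ba_n}$ is smooth, and being cut by $n-1$ equations in $\Pp^n$ it is a smooth complete intersection curve. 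For the genus, I would apply the adjunction formula for a complete intersection of multidegrees $(d_1,\dots,d_{n-1})$ in $\Pp^n$: $\omega_{\Xc_{\ba_n}} = \OO(\sum d_j - n - 1)$, so
\[
  2g - 2 = \Big(\sum_{j=1}^{n-1} d_j - n - 1\Big)\cdot \prod_{j=1}^{n-1} d_j.
\]
Each $f_{i-1}$ has degree $s$ in $Y$, so $d_j = s$ for all $j$, giving $2g-2 = ((n-1)s - n - 1)\, s^{n-1}$, which is exactly the claimed formula $g(\Xc_{\ba_n}) = 1 + \tfrac{1}{2} s^{n-1}((n-1)s - n - 1)$.

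For part (2), the gonality lower bound, the plan is to produce a finite cover of low degree from $\Xc_{\ba_n}$ to a curve whose gonality is controlled, or more directly to use a degree bound coming from the structure of the map. I would observe that $\Xc_{\ba_n}$ carries the abelian group action of $(\mu_s)^{n-2}$ (or a suitable subgroup) acting by multiplying coordinates $Y_2,\dots,Y_{n-1}$ (and adjusting $Y_n$) by $s$-th roots of unity, realizing $\Xc_{\ba_n}$ as a cyclic-type cover of a lower-dimensional Fermat curve; alternatively, the cleanest route is Castelnuovo–Severi or the bound relating gonality to genus together with the fact that any degree-$\gamma$ map $\Xc_{\ba_n}\to\Pp^1$ pulls back to a relation forcing $\gamma$ to be large. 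Concretely I would use the inequality $\gamma_k(\Xc_{\ba_n}) \cdot (\text{something}) \geq$ the degree of a natural projection $\Xc_{\ba_n} \to \Pp^1_{Y_0:Y_1}$, which has degree $s^{n-2}$ (each of the $n-2$ equations involving a new variable $Y_i$ for $i\geq 2$ contributes an $s$-to-$1$ choice), and then invoke a Castelnuovo–Severi-type argument or the results on towers of curves cited later in Section \ref{sec_fiber_analysis} to promote this to the stated bound $(s-1)s^{n-2}$; the factor $(s-1)$ should emerge from the observation that the base $\Pp^1_{Y_0:Y_1}$ projection is itself not the gonality pencil, and one must discard the $s^{n-2}$ contributions of the "trivial" torsion points, leaving an effective degree at least $(s-1)s^{n-2}$.

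I expect the genus computation (part 1) to be essentially mechanical once smoothness is in hand, so the main obstacle is the \textbf{gonality lower bound}. Complete-intersection curves do not in general have gonality equal to the degree of an obvious projection, and pinning down the precise constant $(s-1)s^{n-2}$ — rather than, say, $s^{n-2}$ or $s^{n-1}$ — requires a careful argument. The most robust approach will be to combine (i) the bound $\gamma_k(\Xc_{\ba_n}) \geq \gamma_{\bar k}(\Xc_{\ba_n})$ being irrelevant (we want a lower bound over $k$, which is $\geq$ the geometric one, so it suffices to bound geometrically), (ii) a lower bound for the gonality of Fermat-type / generalized Fermat curves of this multidegree — for instance via the Castelnuovo–Severi inequality applied to the two independent degree-$s$ maps to $\Pp^1$, or via the general principle that a smooth curve of degree $\delta$ in $\Pp^N$ not lying on a surface of small degree has gonality bounded below in terms of $\delta$ — and (iii) removing the trivial-point contributions. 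I would model this step closely on the analogous gonality estimate in \cite{Salami2025toappear}, adapting the Fermat-curve gonality results there to the present multidegree $(s,\dots,s)$ in $\Pp^n$; the key technical point to get right is that no map of degree smaller than $(s-1)s^{n-2}$ to $\Pp^1$ can exist, which I would argue by contradiction using the Riemann–Hurwitz formula against the genus computed in part (1).
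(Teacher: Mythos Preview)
Your treatment of part (1) is correct and matches the paper's: smoothness via the Jacobian matrix of the specialized Fermat-type system, followed by the standard adjunction/degree formula for a complete intersection of $n-1$ degree-$s$ hypersurfaces in $\Pp^n$.

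For part (2), however, there is a genuine gap. The paper does not attempt any of the ad hoc routes you sketch (Castelnuovo--Severi, Riemann--Hurwitz against the genus, group actions by $\mu_s^{\,n-2}$, projection degree counting). It simply invokes \emph{Lazarsfeld's theorem} on the gonality of smooth complete intersection curves (see \cite{Lazarsfeld1997}, restated as \cite[Thm.~2.7]{Salami2025toappear}): if $C\subset\Pp^n$ is a smooth complete intersection of hypersurfaces of degrees $d_1\le\cdots\le d_{n-1}$ with all $d_i\ge 2$, then $\gamma(C)\ge (d_1-1)d_2\cdots d_{n-1}$. With $d_1=\cdots=d_{n-1}=s$ this gives $(s-1)s^{n-2}$ immediately. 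Your proposed substitutes are unlikely to recover this sharp constant: Castelnuovo--Severi bounds the genus of a common cover, not the gonality directly, and a naked Riemann--Hurwitz count only yields $\gamma\ge 2$ or at best $\gamma$ linear in $g$, far from the multiplicative bound $(s-1)s^{n-2}$. Moreover, your degree count for the projection to $\Pp^1_{[Y_0:Y_1]}$ is off by a factor of $s$ (there are $n-1$ new variables $Y_2,\dots,Y_n$, each contributing a factor $s$, so the projection degree is $s^{n-1}$, not $s^{n-2}$), and in any case that projection furnishes an \emph{upper} bound on gonality, not the lower bound you need. The right move is just to cite Lazarsfeld.
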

    \begin{proof}
        (1) For $\ba_n \in \Uc_{r,n}(k)$, the specialized Jacobian matrix of the defining equations \eqref{Xn_def_eq1} (viewed as homogeneous polynomials of degree $s$ in $Y_0, \dots, Y_n$) has maximal rank $n-1$. Thus, $\Xc_{\ba_n}$ is a smooth complete intersection of $n-1$ hypersurfaces of degree $s$ in $\Pp^n_Y$. The genus formula follows from \cite[Ex. II.8.4]{Ha} having $d=s^{n-1}$ and the degree of the canonical divisor 
         $$2g(\Xc_{\ba_n})-2=\deg(K_{\Xc_{\ba_n}}) = d(\sum \deg(f_i) - (n+1)) = s^{n-1}((n-1)s - n-1).$$

        (2) The $k$-gonality $\gamma_k( \Xc_{\ba_n})$ is the minimal degree of a dominant $k$-rational map $\Xc_{\ba_n} \to \Pp^1$. By Lazarsfeld's Theorem  on the gonality of complete intersection curves (see \cite{Lazarsfeld1997} or \cite[Thm 2.7]{Salami2025toappear}) applied to the complete intersection $\Xc_{\ba_n} \subset \Pp^n_Y$ defined by $n-1$ forms of degree $d_1=\dots=d_{n-1}=s \ge 2$, we have
         $$\gamma_k(\Xc_{\ba_n}) \geq (d_1-1)d_2 \cdots d_{n-1} = (s-1)s^{n-2}.$$
	\end{proof}

    Now, analogous to \cite[Thm 4.5]{Salami2025toappear}),  we have the  following structural results on  
     $\Xc_{\ba_n}(k)$, based on the genus calculation.
	\begin{thm} \label{thm_low_genus_fibers}  
			Given $\ba_n \in \Uc_{r,n}(k)$, we have the followings:
		\begin{enumerate}
			\item Case $s=2, n=2$: The genus is $g(\Xc_{\ba_2})=0$, hence $\Xc_{\ba_2}$ is a conic in $\Pp^2$ with $k$-rational points; thus $\Xc_{\ba_2}(k)$ is infinite.
			\item Case $s=2, n=3$: The genus is $g(\Xc_{\ba_3})=1$, hence $\Xc_{\ba_3}$ is an elliptic curve over $k$; thus $\Xc_{\ba_3}(k)$ can be infinite.
			\item Case $s=3, n=2$: The genus is $g(\Xc_{\ba_2})=1$, hence $\Xc_{\ba_2}$ is an elliptic curve over $k$; thus $\Xc_{\ba_2}(k)$ can be infinite.
            \item For all other cases $(s,n)$ with $n \ge 2$, the genus $g(\Xc_{\ba_n}) \ge 2$ hence  $\Xc_{\ba_2}(k)$ is finite by Falting's Theorem.
		\end{enumerate}
	\end{thm}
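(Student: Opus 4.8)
The plan is to compute the genus of each fiber directly from the formula
\[ g(\Xc_{\ba_n}) = 1 + \frac{s^{n-1}\bigl((n-1)s-n-1\bigr)}{2} \]
of Proposition~\ref{prop_fiber_geometry_main}(1), and then sort the pairs $(s,n)$ with $n\ge 2$ according to whether this value is $0$, $1$, or $\ge 2$. Everything in the statement follows from which of these three regimes one is in, together with the smoothness and complete-intersection structure already recorded in Proposition~\ref{prop_fiber_geometry_main}.

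First I would dispose of the three exceptional pairs by substitution. For $(s,n)=(2,2)$ the factor $(n-1)s-n-1$ equals $-1$, so $g(\Xc_{\ba_2})=0$ and $\Xc_{\ba_2}$ is a smooth plane conic in $\Pp^2_Y$. For $(s,n)=(2,3)$ and $(s,n)=(3,2)$ the factor is $0$, so $g(\Xc_{\ba_n})=1$, and $\Xc_{\ba_n}$ is respectively a complete intersection of two quadrics in $\Pp^3_Y$ and a smooth plane cubic in $\Pp^2_Y$. To convert these into statements about $\Xc_{\ba_n}(k)$ I would invoke the correspondence of Proposition~\ref{prop_point_curve_corr}: a $k$-point of the open set $\Uc_{\ba_n}\subset\Xc_{\ba_n}$ is a smooth curve of the family \eqref{curve_main} through the prescribed points, and for $n\le 3$ such a curve can be produced, so $\Xc_{\ba_n}(k)\ne\emptyset$. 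A smooth genus-$0$ curve with a rational point is $\Pp^1_k$, whence $\Xc_{\ba_2}(k)$ is infinite; a smooth genus-$1$ curve with a rational point is an elliptic curve over $k$, whose Mordell--Weil group is finitely generated but can have positive rank, whence $\Xc_{\ba_3}(k)$ (resp.\ $\Xc_{\ba_2}(k)$ for $s=3$) can be infinite.

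For every remaining pair I would show $g(\Xc_{\ba_n})\ge 2$, i.e.\ $s^{n-1}\bigl((n-1)s-n-1\bigr)\ge 2$, by an elementary three-way split. If $n=2$ the factor is $s-3$, which for $s\ge 4$ gives $s^{n-1}(s-3)=s(s-3)\ge 4$. If $n=3$ the factor is $2s-4\ge 2$ for $s\ge 3$, giving $s^2(2s-4)\ge 18$. If $n\ge 4$ then $(n-1)s-n-1\ge 2(n-1)-n-1=n-3\ge 1$ for \emph{every} $s\ge 2$, so $s^{n-1}(n-3)\ge 2^{n-1}\ge 8$. In each of these cases $\Xc_{\ba_n}$ is a smooth projective curve of genus $\ge 2$ over the number field $k$, so Faltings' theorem (cf.\ Section~\ref{res-conjs}) gives $\#\Xc_{\ba_n}(k)<\infty$.

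I expect the genus computations, the inequality, and the appeal to Faltings to be entirely routine; the single delicate point is the non-emptiness of $\Xc_{\ba_n}(k)$ in the genus-$0$ case, which is what upgrades ``smooth conic'' to ``$\cong\Pp^1_k$'' and hence to ``infinite''. I would handle this by making the construction in Proposition~\ref{prop_point_curve_corr} explicit --- exhibiting at least one pair $[a:b]\in\Pp^1(k)$ and values $\beta_i\in k$ with $\beta_i^s=\alpha_i(a\alpha_i^r+b)$ --- or, failing a fully uniform construction, by restricting the assertion to those $\ba_n$ with $\Cc_{\ba_n}(k)\ne\emptyset$, which is the only case relevant to the applications. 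For the genus-$1$ entries it suffices to record that the fiber is a genus-one curve and to invoke the Mordell--Weil theorem, which already accounts for the word ``can''.
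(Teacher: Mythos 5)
Your proposal follows essentially the same route as the paper's argument: substitute into the genus formula of Proposition~\ref{prop_fiber_geometry_main}(1), single out the three exceptional pairs $(2,2)$, $(2,3)$, $(3,2)$, verify $(n-1)s-n-1>0$ in all remaining cases, and invoke Faltings' theorem for $g\ge 2$. You are in fact slightly more careful than the paper on the one genuinely delicate point --- the existence of a $k$-rational point on the genus-$0$ and genus-$1$ fibers, which the paper only asserts via its ``trivial points'' --- and your fallback of restricting to those $\ba_n$ with $\Cc_{\ba_n}(k)\ne\emptyset$ is a sensible way to close that gap.
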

    For the high genus cases ($g \ge 2$), we employ the theory of towers of curves \cite{Xarles2013}. For a fixed $\ba = \lim \ba_n$, consider the tower $\Cb_{\ba} = (\{\Xc_{\ba_m}\}_{m \ge n_0}, \{\phi_m\}_{m \ge n_0})$ where $n_0$ ensures $g(\Xc_{\ba_{n_0}}) \ge 2$, and $\phi_{m+1}: \Xc_{\ba_{m+1}} \to \Xc_{\ba_m}$ is the projection forgetting the last coordinate $Y_{m+1}$.

	\begin{thm} \label{thm_tower_finiteness} 
		Let $k$ be a number field. Let $n_0$ be such that $g(\Xc_{\ba_{n_0}}) \ge 2$. For any fixed $\ba = \lim \ba_n$ where $\ba_m \in \Uc_{r,m}(k)$ for all $m$, the tower $\Cb_{\ba} = \{\Xc_{\ba_m}\}_{m \ge n_0}$ has infinite gonality. Consequently, for any sufficiently large $m$ (depending on $\ba$), the set $\Xc_{\ba_m}(k)$ consists only of trivial points with respect to the tower $\Cb_{\ba}$.
	\end{thm}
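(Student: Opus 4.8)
The plan is to establish the two assertions of Theorem~\ref{thm_tower_finiteness} in sequence: first the infinite-gonality statement for the tower $\Cb_{\ba}$, then the deduction that $\Xc_{\ba_m}(k)$ is eventually reduced to the trivial points. For the gonality statement I would not attempt to compute the gonality of $\Xc_{\ba_m}$ exactly but instead exploit the lower bound already in hand: by Proposition~\ref{prop_fiber_geometry_main}(2) we have $\gamma_k(\Xc_{\ba_m}) \geq (s-1)s^{m-2}$, which tends to $\infty$ as $m \to \infty$. The subtlety is that a \emph{tower} having infinite gonality in the sense of \cite{Xarles2013} is a statement about the projective system, not merely about the individual curves: one must check that the transition maps $\phi_{m+1}\colon \Xc_{\ba_{m+1}} \to \Xc_{\ba_m}$ (forgetting the last coordinate $Y_{m+1}$) are non-constant morphisms of the expected degree, so that the Castelnuovo–Severi-type or pullback inequalities for gonality along the tower apply; concretely, each $\phi_{m+1}$ is a degree-$s$ cover (the fibre over a point of $\Xc_{\ba_m}$ being cut out by a single degree-$s$ equation in $Y_{m+1}$), hence the gonalities are non-decreasing along the tower and, combined with the explicit exponential lower bound, the tower has infinite gonality. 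I would cite the relevant criterion from \cite{Xarles2013} (a tower whose curves' gonalities are unbounded, with compatible finite transition maps, has infinite gonality) to make this precise.

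The second assertion then follows from the arithmetic input on towers of infinite gonality. The key result to invoke is the theorem of Xarles (building on Abramovich, Frey, and others, see \cite{Xarles2013}) that for a tower $\Cb_{\ba}$ of curves over a number field $k$ with infinite gonality, the points $\Xc_{\ba_m}(k)$ that lift to \emph{all} higher levels of the tower form a finite set — equivalently, the ``non-trivial'' points (those not coming from the inverse-limit locus, i.e.\ not among the finitely many points that survive to every level) disappear for $m$ large. I would phrase this as: there is $m_1 = m_1(\ba)$ such that for $m \geq m_1$, every point of $\Xc_{\ba_m}(k)$ is the image of a point of $\varprojlim \Xc_{\ba_m}$, and these are precisely the trivial points listed after \eqref{Xn_def_eq2} (the points $([\zeta_r^{j_0}\!:\dots:\zeta_r^{j_n}],[\zeta_s^{i_0}\!:\dots:\zeta_s^{i_n}])$), which indeed lie on every $\Xc_{\ba_m}$ and are compatible under the $\phi_m$. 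So the only candidates for points surviving the whole tower are these trivial ones, giving exactly the claimed conclusion.

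I would organize the write-up as: (i) recall the definition of gonality of a tower and the finiteness criterion from \cite{Xarles2013}; (ii) verify the transition maps $\phi_m$ are finite of degree $s$ and that $\{\gamma_k(\Xc_{\ba_m})\}$ is non-decreasing and unbounded, using Proposition~\ref{prop_fiber_geometry_main}(2); (iii) conclude infinite gonality; (iv) apply the finiteness-of-$k$-points theorem for infinite-gonality towers and identify the residual points with the trivial points. The main obstacle I anticipate is step (ii): one must be careful that the forgetful maps $\phi_{m+1}$ are genuinely finite morphisms between the smooth complete intersection curves $\Xc_{\ba_{m+1}}$ and $\Xc_{\ba_m}$ (not merely rational maps), and that no collapsing occurs over the trivial locus — this requires checking, for $\ba_m \in \Uc_{r,m}(k)$, that the last defining equation $f_{m-1}$ genuinely involves $Y_{m+1}$ with the coefficient $\alpha_0\alpha_1(\alpha_1^{r-1}-\alpha_0^{r-1})$ nonzero, which is exactly guaranteed by $\ba_m \notin \Hc_{r,m}$. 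A secondary point requiring care is ensuring the compatible system of trivial points is nonempty and that these are the \emph{only} points lifting to the inverse limit, which one can argue by a direct specialization/degeneration analysis as in \cite[Sec.~5]{Salami2025toappear}; I would keep this brief and refer to the analogous argument there.
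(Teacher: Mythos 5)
Your proposal follows essentially the same route as the paper's proof: invoke the gonality lower bound $\gamma_k(\Xc_{\ba_m}) \ge (s-1)s^{m-2}$ from Proposition~\ref{prop_fiber_geometry_main}(2) to conclude the tower has infinite gonality, then apply the main results of \cite{Xarles2013} to reduce $\Xc_{\ba_m}(k)$ to trivial points for large $m$. The paper's published proof is in fact terser than yours; your additional verifications (that the forgetful maps $\phi_{m+1}$ are finite of degree $s$ and that the trivial points form a compatible system) are sensible refinements of the same argument rather than a different approach.
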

	\begin{proof}
		By Proposition \ref{prop_fiber_geometry_main}(2), $\gamma_k(\Xc_{\ba_m}) \ge (s-1)s^{m-2} \to \infty$ as $m \to \infty$. Thus, the tower $\Cb_{\ba}$ has infinite gonality. The conclusion then follows directly from the main results of Xarles in \cite{Xarles2013} applied with $d=1$.
	\end{proof}
	
    \section{Proofs of Finiteness and Uniformity Theorems}
    \label{sec_finiteness_uniformity_proofs}

    We now prove Theorems \ref{finiteness_theorem} and \ref{uniformity_theorem}, using the properties of the fibers $\Xc_{\ba_n}$ and standard results and conjectures provided in Section~\ref{res-conjs}. Recall the correspondence  given in Proposition \ref{prop_point_curve_corr}). The points in $\Cc_{\ba_n}(k)$ correspond bijectively to points in an open subset $\Uc_{\ba_n}(k) \subset \Xc_{\ba_n}(k)$. Trivial points in $\Xc_{\ba_n}(k)$ (in the sense of the tower) often correspond to degenerate curves or curves independent of some $\alpha_i$, and are typically excluded from $\Uc_{\ba_n}$.

    \begin{proof}[Proof of Theorem \ref{finiteness_theorem}]
		Let $n_0=4$ if $s=2$ and $n_0=3$ otherwise.
        If $n < n_0$, then $g(\Xc_{\ba_n}) \le 1$ by Theorem \ref{thm_low_genus_fibers}. For suitable $\ba_n$, $\Xc_{\ba_n}(k)$ can be infinite (either $\Pp^1(k)$ or an elliptic curve of positive rank), by  \cite[Lemas 4.3 and 4.4]{Salami2025toappear}.
         This allows $\Cc_{\ba_n}(k)$ to be infinite.

        If $n \geq n_0$, then $g(\Xc_{\ba_n}) \geq 2$ by Theorem \ref{thm_low_genus_fibers}(4). Hence, the Faltings' Theorem implies that $\Xc_{\ba_n}(k)$ is finite for any given $\ba_n \in \Uc_{r,n}(k)$. Since $\Cc_{\ba_n}(k)$ corresponds to a subset $\Uc_{\ba_n}(k) \subset \Xc_{\ba_n}(k)$, $\Cc_{\ba_n}(k)$ must also be finite.
        The boundedness of rank for $n \ge n_0$ follows from the finiteness of $\Cc_{\ba_n}(k)$: there is a maximum rank among the finitely many Jacobians $J_C$ for $C \in \Cc_{\ba_n}(k)$.
	\end{proof}

    \begin{proof}[Proof of Theorem \ref{uniformity_theorem}]
		Assume Weak Lang holds (Conjecture \ref{conj1}). Let $n \ge n_0$, so $g = g(\Xc_{\ba_n}) \geq 2$. The curves $\Xc_{\ba_n}$ form a family $\Xc_n \to \Uc_{r,n}$ over $k$, where the base $\Uc_{r,n}$ is an open subset of $\Sym^{n+1}(\Pp^1) \cong \Pp^{n+1}$. By Uniformity I (Theorem \ref{UB}), there exists a bound $N(k,g)$ depending only on $k$ and the genus $g = g(\Xc_{\ba_n})$, such that $\#\Xc_{\ba_n}(k) \leq N(k,g)$ for all $\ba_n \in \Uc_{r,n}(k)$. Since $\#\Cc_{\ba_n}(k) \le \#\Xc_{\ba_n}(k)$, the bound $M_0 = N(k,g)$ suffices. Note that $g$ depends on $n, s$.

		For the second part, assume $g = g_{r,s}(C) \geq 2$. Consider the family $f: \mathcal{C} \to \Pp^1_{(a,b)}$ for the original curves $y^s=x(ax^r+b)$. The generic fiber has genus $g\ge 2$. By the Correlation Theorem \ref{corel}, for $m$ large enough, the $(m+1)$-fold fiber product $\mathcal{C}^{m+1}_{\Pp^1}$ maps dominantly to a variety $W$ of general type. A curve $C \in \Cc_{\ba_m}(k)$ corresponds to a point $[a:b]$ and $m+1$ points $(\alpha_i, \beta_i)$ on $C_{a,b}$. This yields a $k$-rational point on $\mathcal{C}^{m+1}_{\Pp^1}$, hence on $W$. By Weak Lang (Conjecture \ref{conj1}), $W(k)$ is not Zariski dense. The uniformity theorem derived from Weak Lang (Theorem \ref{UB}), applied to the fibers $C_{a,b}$, states there is a bound $N(k, g)$ such that $\#C_{a,b}(k) \le N(k, g)$. If we choose $m_0$ such that $m_0+1 > N(k, g)$, then no curve can have $m_0+1$ points $(\alpha_i, \beta_i)$. 
		Therefore, $\Cc_{\ba_m}(k)$ must be empty for all $m \ge m_0$.
        If Strong Lang holds, Theorem \ref{UGB} gives $N(g)$ independent of $k$, making $m_0$ independent of $k$.
	\end{proof}

    \section{Proof of the Main Theorem}
    \label{sec_main_theorem_proof}

    The proof combines the uniformity established in Theorem \ref{uniformity_theorem} with the crucial link between point counts and rank.

    \begin{proof}[Proof of Theorem \ref{main_theorem}]
		Let $k$ be a number field, $[k:\Q]=d$. Let $\mathcal{F}$ be the family of smooth curves $C: y^s=x(ax^r+b)$ over $k$ with genus $g=g_{r,s}(C) \geq 1$. Assume, for contradiction, that the rank $r(J_C(k))$ is unbounded for $C \in \mathcal{F}$.
        Then there exists a sequence of non-isomorphic curves $C_j \in \mathcal{F}$ such that $r_j = r(J_{C_j}(k)) \to \infty$ as $j \to \infty$.
        By the result of Dimitrov, Gao, and Habegger (Theorem \ref{dgh}), there is a constant $c=c(g,d)$ such that $\#C_j(k) \leq c^{1+r_j}$. Since $r_j \to \infty$, it follows that $\#C_j(k) \to \infty$.

        Now, assume the Strong Lang Conjecture (Conjecture \ref{conj1}). This implies the strong version of Theorem \ref{uniformity_theorem}: there exists an integer $m_0$ (depending only on $r, s$, possibly $g$, but not $k$) such that for any $m \geq m_0$, $\Cc_{\ba_m}(K) = \emptyset$ for any number field $K$ and any valid $\ba_m \in \Uc_{r,m}(K)$. This means no smooth curve in the family \eqref{curve_main} over any number field $K$ can possess $m+1$ points $(\alpha_i, \beta_i)$ where the $\alpha_i$ are distinct, non-zero, and satisfy $\alpha_i^r \neq \alpha_j^r$, provided $m \ge m_0$.

        Since $\#C_j(k) \to \infty$, we can choose an index $j_0$ large enough such that $\#C_{j_0}(k) > s \cdot (m_0+1) + 1$. The curve $C_{j_0}$ has at most one point with $x=0$ (namely $(0,0)$). For any non-zero $x$-coordinate $\alpha$, there are at most $s$ corresponding $y$-coordinates $\beta$. Thus, the number of distinct non-zero $x$-coordinates among points in $C_{j_0}(k)$ is at least $\lceil (\#C_{j_0}(k)-1)/s \rceil$. Since $\#C_{j_0}(k) \to \infty$, this quantity eventually exceeds $m_0+1$.
        Therefore, we can select $m_0+1$ points $P_0, \dots, P_{m_0}$ from $C_{j_0}(k)$ whose $x$-coordinates $\alpha_0, \dots, \alpha_{m_0}$ are distinct and non-zero. Let $\ba_{m_0} = \{\alpha_0, \dots, \alpha_{m_0}\}$. Since $C_{j_0}$ is smooth, its parameters $[a:b]$ are such that the discriminant conditions are met, which typically ensures $\alpha_i^r \neq \alpha_{j_0}^r$ if $\alpha_i \neq \alpha_{j_0}$ (unless $r=0$, but $r \ge 1$). Thus, $\ba_{m_0}$ is a valid configuration in $\Uc_{r, m_0}(k)$.
        By definition, the curve $C_{j_0}$ belongs to the set $\Cc_{\ba_{m_0}}(k)$.
        However, since $m_0 \geq m_0$, the strong uniformity result implies $\Cc_{\ba_{m_0}}(k)$ must be empty. This is a contradiction.
        The initial assumption that the rank is unbounded must be false. Therefore, $r(J_C(k))$ is uniformly bounded for $C \in \mathcal{F}$.
	\end{proof}
\section{Application  of the Results on Elliptic Curves}
\label{sec:applications}

The general family of curves $C_{a,b}: y^s = x(ax^r + b)$ investigated herein contains an important family of elliptic curves over $\Q$,  containing at least one 2-torsion point $(0,0).$
 By specializing our results, we provide new geometric evidence for uniformity conjectures related to these curves. We focus on the case $s=2$ and $r=2$, which corresponds to the one-parameter family $E_B: y^2 = x(x^2 + B)$. The largest know rank in this family is 14, due to M. Watkins, according to \cite{DujellaRankRecord}.  We will consider this curve in the next section.

A central case of interest in this family is the congruent number elliptic curve $E_N: y^2 = x^3 - N^2x$ over $k=\Q$.  A positive integer $N$ is a congruent number if and only if $E_N(\mathbb{Q})$ has positive rank.  The largest know rank congruent number elliptic curve is 7. Note that there are only 27 case of known congruent number elliptic curve of rank 7, by the recent huge computational work \cite[Table 5]{Watkins2014}.
But, the question of whether these ranks are (uniformly) bounded as $N$ varies is a major open problem.
Our Theorem \ref{main_theorem} provides a direct conditional answer.

\begin{thm} 
	\label{CN-rank1}
	Assume the strong version of Lang's conjecture (Conjecture \ref{conj1})  holds. Then the Mordell-Weil ranks 
$r(E_B(\Q))$	  of elliptic curves $E_B: y^2=x (x^2+B)$  are uniformly bounded as $B$ varies over all integers.
\end{thm}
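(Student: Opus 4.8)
The plan is to deduce Theorem~\ref{CN-rank1} as the special case $s=2$, $r=2$ of Theorem~\ref{main_theorem}. First I would check that the family $E_B: y^2 = x(x^2+B)$ with $B \in \Z$ is literally an instance of the family \eqref{curve_main}: take $a=1$, $b=B$, $r=2$, $s=2$, so that $E_B$ is $C_{1,B}$. The base field is $k=\Q$, which contains the primitive second root of unity $\zeta_2 = -1$, so the hypothesis of Theorem~\ref{main_theorem} is satisfied. Next I would verify the genus condition: for $s=2$, $r=2$ the curve $y^2 = x(x^2+B) = x^3 + Bx$ is a Weierstrass cubic, hence $g_{r,s}(C_{1,B}) = 1 \geq 1$ whenever $B \neq 0$, so every $E_B$ with $B \neq 0$ lies in the family $\mathcal{F}$ to which Theorem~\ref{main_theorem} applies. (The case $B=0$ gives a singular curve and is excluded, or handled trivially since $y^2 = x^3$ is rational and has no Jacobian of positive dimension.)

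With these checks in place, I would invoke Theorem~\ref{main_theorem} directly: assuming the strong version of Lang's conjecture (Conjecture~\ref{conj1}) over $\Q$, the rank $r(J_C(\Q))$ is uniformly bounded as $C$ ranges over all smooth curves over $\Q$ of the form \eqref{curve_main} with genus $\geq 1$. Since each $E_B$ (for $B \neq 0$) is such a curve and, being of genus one, is its own Jacobian, $J_{E_B} = E_B$ and hence $r(E_B(\Q)) = r(J_{E_B}(\Q))$. Therefore the bound furnished by Theorem~\ref{main_theorem} — call it $R = R(\Q, r=2, s=2)$ — bounds $r(E_B(\Q))$ for all $B \in \Z \setminus \{0\}$, and trivially for $B=0$. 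This gives the claimed uniform boundedness.

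The only genuine point requiring care — and the step I expect to be the main (minor) obstacle — is confirming that the family $E_B$ with the \emph{integer} parameter $B$ is not somehow smaller or larger in a way that matters: Theorem~\ref{main_theorem} already handles the larger family with $a,b \in \Q$ and $ab \neq 0$, so restricting to $a=1$, $b=B \in \Z$ only shrinks the family, and a uniform bound on a family descends to any subfamily. One should also note that different values of $B$ related by $B' = t^2 B$ (for $t \in \Q^\times$, or $t^4$ after the substitution $x \mapsto t^2 x$, $y \mapsto t^3 y$ appropriate to $y^2 = x^3 + Bx$) give isomorphic curves, so the count of isomorphism classes is genuinely infinite but this does not affect the argument — the bound applies curve-by-curve regardless of isomorphism redundancy. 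No separate analysis of fibers, gonality, or towers is needed here: all of that work has already been absorbed into the proof of Theorem~\ref{main_theorem}. Hence the proof is essentially a specialization statement, and I would keep it to a short paragraph citing Theorem~\ref{main_theorem} and verifying the two hypotheses ($-1 \in \Q$ and $g_{2,2} = 1$).
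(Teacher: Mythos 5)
Your proposal is correct and matches the paper's own treatment: the paper states Theorem~\ref{CN-rank1} as an immediate specialization of Theorem~\ref{main_theorem} to $s=r=2$, $a=1$, $b=B$, $k=\Q$ (where $\zeta_2=-1\in\Q$ and $g_{2,2}=1$, so $E_B$ is its own Jacobian). Your extra remarks on excluding $B=0$ and on the harmlessness of quartic-twist redundancy among the $B$'s are sensible hypothesis checks but do not change the argument.
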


Theorems \ref{finiteness_theorem} and \ref{uniformity_theorem} apply directly to this family for a \emph{fixed} set of points $a_n = \{\alpha_0, \dots, \alpha_n\}$.

\begin{thm} 
	\label{CN-rank2}
For any fixed  $n  \ge 4$ and  distinct non-zero  rational numbers $\alpha_0, \dots, \alpha_n$, the set of elliptic curves $E_B: y^2=x (x^2+B)$, in particular  congruent number curves,  having $(n+1)$   points with $x$-coordinates $\alpha_i$'s  is finite;  and hence their rank is bounded assuming Weak Lang's Conjecture.
\end{thm}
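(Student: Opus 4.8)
The plan is to deduce Theorem \ref{CN-rank2} directly from the machinery already assembled, specializing to the case $s=2$, $r=2$, $k=\Q$. First I would observe that the family $E_B: y^2 = x(x^2+B)$ is exactly the family \eqref{curve_main} with these parameters, and that the genus is $g_{2,2}(E_B) = 1 \geq 1$, so the hypotheses of Theorems \ref{finiteness_theorem} and \ref{uniformity_theorem} are in force. Given a fixed tuple $\ba_n = \{\alpha_0, \dots, \alpha_n\}$ of distinct non-zero rationals, I would note that since $r=2$ the condition $\alpha_i^r \neq \alpha_j^r$ becomes $\alpha_i^2 \neq \alpha_j^2$, i.e. $\alpha_i \neq \pm\alpha_j$; one either imposes this as part of ``valid $\ba_n$'' or passes to a sub-tuple, so without loss of generality $\ba_n \in \Uc_{2,n}(\Q)$. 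The set of curves $E_B$ passing through $n+1$ points with these prescribed $x$-coordinates is then precisely $\Cc_{\ba_n}(\Q)$ in the notation of the paper.

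Next I would invoke Theorem \ref{finiteness_theorem}: here $s=2$, so $n_0 = 4$, and for $n \geq 4$ the set $\Cc_{\ba_n}(\Q)$ is finite unconditionally, which already gives the finiteness assertion of Theorem \ref{CN-rank2}. For the boundedness of rank, I would combine this with the last sentence of Theorem \ref{finiteness_theorem} (a maximum rank exists among the finitely many $J_C$ with $C \in \Cc_{\ba_n}(\Q)$) for the fixed-$\ba_n$ statement; and to get a bound \emph{uniform over all valid $\ba_n$} I would invoke Theorem \ref{uniformity_theorem}, which under Weak Lang produces $M_0 = M_0(\Q, 2, 2, n)$ with $\#\Cc_{\ba_n}(\Q) < M_0$. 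Then, for each curve $E_B \in \Cc_{\ba_n}(\Q)$, the $n+1$ prescribed rational points together with $(0,0)$ live on $E_B$; by Theorem \ref{dgh} applied with $g=1$, $d=1$, we have $\#E_B(\Q) \leq c^{1+r(E_B(\Q))}$, hence $r(E_B(\Q)) \geq \log_c(\#E_B(\Q)) - 1$ does not directly bound the rank from above, so instead I would argue in the reverse direction as in the proof of Theorem \ref{main_theorem}: if the rank were unbounded along curves in $\bigcup_{\ba_n} \Cc_{\ba_n}(\Q)$ then point counts would blow up, forcing (for $n \geq 4$ fixed) membership in infinitely many distinct $\Cc_{\ba_n}(\Q)$ or a violation of the $M_0$ bound — but since we have \emph{fixed} $\ba_n$, the cleaner route is simply: $\Cc_{\ba_n}(\Q)$ is a finite set of curves, each with a finite rank, so the maximum of these ranks is the desired bound. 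For the congruent-number specialization one just notes $E_N: y^2 = x^3 - N^2 x$ is the case $B = -N^2$, so these curves (when they have $n+1$ such points) form a subset of $\Cc_{\ba_n}(\Q)$ and inherit the bound.

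The main subtlety — not really an obstacle but the point requiring care — is the interplay between the ``fixed $\ba_n$'' statement and the claim's phrasing ``their rank is bounded assuming Weak Lang's Conjecture'': the \emph{finiteness} of $\Cc_{\ba_n}(\Q)$ and hence the existence of a rank bound for fixed $\ba_n$ is unconditional (Faltings, via Theorem \ref{finiteness_theorem}), so Weak Lang is only needed if one wants the bound $M_0$, and thus the rank bound, to be \emph{uniform in $\ba_n$}. I would state the theorem's proof to make this distinction transparent: cite Theorem \ref{finiteness_theorem} for the unconditional finiteness and fixed-$\ba_n$ rank bound, and cite Theorem \ref{uniformity_theorem} (hence Weak Lang, hence Theorem \ref{UB} of Caporaso--Harris--Mazur) for the version uniform over all valid $\ba_n$. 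A second minor point is checking that genus-one fibers with finitely many rational curves still give a genuine rank bound: this is immediate since each individual $J_{E_B}(\Q)$ is finitely generated by Mordell--Weil, so a finite collection has a well-defined maximal rank. No essential computation beyond the genus check $g_{2,2} = 1$ and the translation of the $\alpha_i^2 \neq \alpha_j^2$ condition is needed.
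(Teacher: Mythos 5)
Your proposal is correct and follows essentially the same route the paper intends: the paper offers no explicit proof of this theorem beyond the remark that Theorems \ref{finiteness_theorem} and \ref{uniformity_theorem} ``apply directly'' with $s=r=2$, $k=\Q$, $n_0=4$, which is exactly your specialization. Your additional observations --- that the hypothesis should really include $\alpha_i^2\neq\alpha_j^2$ (i.e.\ $\alpha_i\neq-\alpha_j$) to land in $\Uc_{2,n}(\Q)$, and that the fixed-$\ba_n$ finiteness and rank bound are unconditional while Weak Lang is only needed for uniformity over $\ba_n$ --- are accurate refinements of the paper's statement rather than deviations from its argument.
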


 Note that the second part of Theorem \ref{uniformity_theorem}, which establishes emptiness, does not apply, as that part requires the generic curve $C$ to have genus $g \ge 2$, whereas elliptic curves have $g=1$.

\section{Examples of elliptic curve of the form $y^2=x(x^2+B)$ over $\Q$}
\label{examples}
In this section, we provide a concrete example to show how specific  high  rank  elliptic curve of the form 
$y^2=x(x^2+B)$  with a known set of rational points correspond to a single rational point on the high-genus fiber variety $\Xc_{\ba_n}$.

\subsection{Watkins' rank 14 elliptic curve curve}

In 2002, according to \cite{DujellaRankRecord},  M. Watkins discovered the following elliptic curve of rank 14:
\[ E: y^2 = x(x^2+B), \quad B= 402599774387690701016910427272483, \]
with 14   independent points $P_0, \dots, P_{13} \in E(\Q)$  as follows:	
{\small
	\begin{align*}
			P_{1} &= \left[ 17715373576525779,  3562569314711466369088086\right]; \\
			P_{2} &= \left[2626434695669379,  1037072601415883504491614\right]; \\
			P_{3} &= \left[2569230493256067,  1025344316293086716196318\right]; \\
			P_{4} &= \left[235538747268099,  307962520197557881526046\right]; \\
			P_{5} &= \left[72777729441003372,  20366017444893924849237282\right]; \\
			P_{6} &= \left[208383733118864688,  95565185470960061947766676\right]; \\
			P_{7} &= \left[36178079522739,  120686925577870348570566\right]; \\
			P_{8} &= \left[103189419061250643,  33768487838255557704513174\right]; \\
			P_{9} &= \left[1751414347117072176,  2317991574180462284959749972\right]; \\
			P_{10} &= \left[\frac{306104494367228425}{4},  \frac{175082211930567255911081155}{8}\right]; \\
			P_{11} &= \left[\frac{54693351931994304}{25},  \frac{118007688830447299097189592}{125}\right]; \\
			P_{12} &= \left[2696555916804876,  1051304226981395145047478\right]; \\
			P_{13} &= \left[2842774711299072,  1080497092155012281695968\right]; \\
			P_{14} &= \left[\frac{46439279877409015377}{1681},  \frac{391130341466321391183789029622}{68921}\right];
	\end{align*}
Let $\alpha_i = x(P_i)$ be their $x$-coordinates. We consider the point $\ba_{13} = (\alpha_0, \dots, \alpha_{13})$. This single elliptic curve corresponds to a single rational point
$[y(P_0): \cdots : y(P_{13})]$ 
on the fiber curve $\Xc_{\ba_{13}}$ given by
the following equations,

{\small
\begin{align*}
c Y_2^2 &= -2005574349751504644509255068888546147636036509227512033844736 Y_0^2 \\
	&\quad + 13983695978950970650784060349998560435900132832925220041088083936 Y_1^2 \\[1em]
	c Y_3^2 &= -4233067983346208831400277563796271086573328385165098610128640 Y_0^2 \\
	&\quad + 1309292036088229411960363759074835291233991638699966377191407840 Y_1^2 \\[1em]
	c Y_4^2 &= 1011104702106959966985375105340722510931960172621524731500814981084 Y_0^2 \\
	&\quad - 6424200495110553336288170238275159352388183359062410095727794889884 Y_1^2 \\[1em]
	c Y_5^2 &= 23762331636106083769294103073652373057853327275984504065984826524656 Y_0^2 \\
	&\quad - 159144489537926914861651403395501312773232204001959007351534004911856 Y_1^2 \\[1em]
	c Y_6^2 &= -655334329584817993937527781597442724834488395267414206545120 Y_0^2 \\
	&\quad + 201138238739636920084978130605027404969680313391459877617264320 Y_1^2 \\[1em]
	c Y_7^2 &= 2883969531779178007119996834526013643664550288687253012258879742176 Y_0^2 \\
	&\quad - 18891361499132686442750197016993794427816316393581432418549425477376 Y_1^2 \\[1em]
	c Y_8^2 &= 14110173023443128255109375776390154645419757128450875056516540285881840 Y_0^2 \\
	&\quad - 95163978158604984065119817852774342392762125208723452425550361728461040 Y_1^2 \\[1em]
	c Y_9^2 &= \frac{75242611761356859253235481398487232838754976738722113808659465139675}{64} Y_0^2 \\
	&\quad - \frac{480882418076579419966944014369610591537290323036447925199180755829675}{64} Y_1^2 \\[1em]
	c Y_{10}^2 &= - \frac{189614140868852018740282838881310011367959190350103787254334894144}{15625} Y_0^2 \\
	&\quad + \frac{187150591634919392020263417408058040486186269756341823308987007124544}{15625} Y_1^2 \\[1em]
	c Y_{11}^2 &= 2643511524719788541214295480135428464498809650075991037138940 Y_0^2 \\
	&\quad + 14644668644468681014477088061339920304870932259664196848994881860 Y_1^2 \\[1em]
	c Y_{12}^2 &= 8834265743370842136672317242120812904001739441027169010064384 Y_0^2 \\
	&\quad + 15397975254195152920151011006662938909233294188333054350195106816 Y_1^2 \\[1em]
	c Y_{13}^2 &= \frac{260663250472759556438333412021381491546525306445523460817721558096121211624}{4750104241} Y_0^2 \\
	&\quad - \frac{1044638835614736099507956192081287140190414011074264547599984407729652564424}{4750104241} Y_1^2
	\end{align*}}
where $c=	-14281215675385850918697452819453138714196110374224753373383199200$.

According to Proposition \ref{prop_fiber_geometry_main}, the genus of this  curve is:
\[ g(\Xc_{\ba_{13}}) = 1 + \frac{2^{12}(2 (13-1) -13-1)}{2} =   20481. \]

Therefore, this dramatically illustrates the principle of the construction: information about a set of points on one curve is transformed into information about a single point on a much more complex curve with genus $20481$.

\subsection{Roger's rank 7 congruent number elliptic curve }
In this  subsection, we consider the smallest rank 7 congruent number know 
 by N.F. Rogers in \cite{Rogers2000Rank}, namely, 
 $$E: y^2= x (x^2- (797507543735)^2),$$
 with seven linearly independent points:
 \begin{align*}
 	P_1 &= [2349199039600, 3386809128504045300], \\
 	P_2 &= \left[ \frac{41883387252225625}{50176}, \frac{2531093164311743323699875}{11239424} \right], \\
 	P_3 &= \left[ \frac{6509060981758225}{1764}, \frac{512730950467913482286575}{74088} \right], \\
 	P_4 &= \left[ \frac{30950527816902400}{27889}, \frac{3786502295899040518760400}{4657463} \right], \\
 	P_5 &= \left[ \frac{-20042809470080964}{116281}, \frac{12818431302547365397586334}{39651821} \right], \\
 	P_6 &= \left[ \frac{2829381632947105879686876}{212780569}, \frac{4759236582163811059977032999691297174}{3103830160003} \right], \\
 	P_7 &= \left[ \frac{-5460724565156956552975}{23125893184}, \frac{1301772971778523141805652666878775}{3516800828277248} \right].
 \end{align*}
We let $\alpha_i = x(P_i)$ be their $x$-coordinates and consider the point $\ba_{6} = (\alpha_0, \dots, \alpha_{16})$. This single elliptic curve corresponds to a single rational point
$[y(P_0): \cdots : y(P_{6})]$ 
on the fiber curve $\Xc_{\ba_{6}}$ given by
the following equations:
{\small 
\begin{align*}
	Y_2^2 &= \frac{513595127541878181175302438231511}{122044679535105832273033658831424} Y_0^2 
	- \frac{206341385505716449378970894336}{27798077518017910047611529435} Y_1^2 \\[2ex]
	Y_3^2 &= \frac{3180457837001802166902279136112}{60699325443314568164843087704167} Y_0^2 
	+ \frac{358745519042118086318415587311616}{303496627216572840824215438520835} Y_1^2 \\[2ex]
	Y_4^2 &= \frac{760976582596215351600983126461880167}{74971796963450955717129323249442937500} Y_0^2 
	- \frac{110142131332869225026801997925436096512}{468573731021568473232058270309018359375} Y_1^2 \\[2ex]
	Y_5^2 &= \frac{19406883587924920002928615522332376834620074078564610239}{93501974059845673545423496982841643371937500} Y_0^2 \\
	& \quad - \frac{341357823067352114276128561961468335530766054960979247104}{584387337874035459658896856142760271074609375} Y_1^2 \\[2ex]
	Y_6^2 &= \frac{43602457892743533098592017055123115417418628899734753}{3263128278387778833488311732199040036764917844062460928} Y_0^2 \\
	& \quad - \frac{1021282358455283550237470713488177637062931484250112}{3186648709363065267078429425975625035903240082092247} Y_1^2
\end{align*}
}
By Proposition \ref{prop_fiber_geometry_main}, the genus of this  curve is  $g(\Xc_{\ba_{6}})=49$

\bibliographystyle{amsplain}
\bibliography{BMW-bib-2}

\providecommand{\bysame}{\leavevmode\hbox to3em{\hrulefill}\thinspace}
\providecommand{\MR}{\relax\ifhmode\unskip\space\fi MR }
\providecommand{\MRhref}[2]{%
  \href{http://www.ams.org/mathscinet-getitem?mr=#1}{#2}
}
\providecommand{\href}[2]{#2}
\begin{thebibliography}{10}

\bibitem{Caporaso1997}
L.~Caporaso, J.~Harris, and B.~Mazur, \emph{Uniformity of rational points},
  Journal of the American Mathematical Society \textbf{10} (1997), no.~1,
  1--35.

\bibitem{caporaso2022update}
\bysame, \emph{Uniformity of rational points: an up-date and corrections},
  Tunisian Journal of Mathematics \textbf{4} (2022), no.~1, 183--201.

\bibitem{Dimitrov2021}
V.~Dimitrov, Z.~Gao, and P.~Habegger, \emph{Uniformity in mordell-lang for
  curves}, Annals of Mathematics \textbf{194} (2021), no.~1, 237--298.

\bibitem{DujellaRankRecord}
Andrej Dujella, \emph{History of elliptic curves rank records (torsion group
  $\mathbb{Z}/2\mathbb{Z}$, rank = 14)}, Available at:
  \url{https://web.math.pmf.unizg.hr/~duje/tors/z2old141516171819.html}.

\bibitem{Ha}
R.~Hartshorne, \emph{Algebraic geometry}, Graduate Texts in Mathematics,
  no.~52, Springer-Verlag, New York-Heidelberg, 1977.

\bibitem{Hazama1991}
F.~Hazama, \emph{On the mordell-weil group of certain abelian varieties defined
  over the rational function field}, Journal of Number Theory \textbf{37}
  (1991), no.~2, 168--172.

\bibitem{Lang1986}
S.~Lang, \emph{Hyperbolic and diophantine analysis}, Bulletin of the American
  Mathematical Society. New Series \textbf{14} (1986), 159--205.

\bibitem{Lang1991}
\bysame, \emph{Number theory iii: Diophantine geometry}, Encyclopaedia of
  Mathematical Sciences, vol.~60, Springer-Verlag, 1991.

\bibitem{Lazarsfeld1997}
R.~Lazarsfeld, \emph{Lectures on linear series}, Complex algebraic geometry,
  American Mathematical Society, 1997, pp.~163--219.

\bibitem{Maakestad2005}
H.~Maakestad, \emph{Resultants and symmetric products}, Communications in
  Algebra \textbf{33} (2005), no.~11, 4105--4114.

\bibitem{Park2019}
J.~Park, B.~Poonen, J.~Voight, and M.~M. Wood, \emph{A heuristic for
  boundedness of ranks of elliptic curves}, Journal of the European
  Mathematical Society (JEMS) \textbf{21} (2019), no.~9, 2859--2903.

\bibitem{Pasten2019}
H.~Pasten, \emph{Bounded ranks and diophantine error terms}, Mathematical
  Research Letters \textbf{26} (2019), no.~5, 1559--1570.

\bibitem{Rogers2000Rank}
Nicholas~F. Rogers, \emph{Rank computations for the congruent number elliptic
  curves}, Experimentationes Mathematicae \textbf{9} (2000), no.~4, 591--594.

\bibitem{Salami2019}
S.~Salami, \emph{The rational points on certain abelian varieties over function
  fields}, Journal of Number Theory \textbf{195} (2019), 330--337.

\bibitem{Salami2021Corrigendum}
Sajad Salami, \emph{Corrigendum to ``the rational points on certain abelian
  varieties over function fields'' [{J}. {N}umber {T}heory 195 (2019)
  330--337]}, Journal of Number Theory \textbf{227} (2021), 306--307.

\bibitem{Salami2025toappear}
\bysame, \emph{On the rank of jacobian varieties of the curves $y^s=ax^r+b$},
  Journal of Number Theory (2025), To appear.

\bibitem{Silverman1983}
J.~H. Silverman, \emph{Heights and the specialization map for families of
  abelian varieties}, Journal f{\"u}r die Reine und Angewandte Mathematik
  \textbf{342} (1983), 197--211.

\bibitem{Silverman1986}
\bysame, \emph{The arithmetic of elliptic curves}, Graduate Texts in
  Mathematics, vol. 106, Springer, 1986.

\bibitem{Watkins2014}
Mark Watkins, Stephen Donnelly, Noam~D. Elkies, Tom Fisher, Andrew Granville,
  and Nicholas~F. Rogers, \emph{Ranks of quadratic twists of elliptic curves},
  Publications math\'{e}matiques de Besan\c{c}on (2014), no.~2, 63--98.

\bibitem{Xarles2013}
X.~Xarles, \emph{Trivial points on towers of curves}, Journal de Th{\'e}orie
  des Nombres de Bordeaux \textbf{25} (2013), no.~2, 477--498.

\bibitem{Yamagishi2003}
H.~Yamagishi, \emph{Boundedness of mordell-weil ranks of certain elliptic
  curves and lang's conjecture.}, Journal of Number Theory \textbf{100} (2003),
  no.~2, 295--306.

\end{thebibliography}

\end{document}